\newcommand{\om}{\omega}
\newcommand{\oequiv}{\approx}
\newcommand{\setconcat}{+}
\newcommand{\COL}{\operatorname{COL}}
\newcommand{\N}{\mathbb{N}}
\newcommand{\Z}{\mathbb{Z}}
\newcommand{\capcrules}{Coloring Rules}
\newcommand{\crulenospace}{CR}
\newcommand{\cruleLong}{coloring rule}
\newcommand{\crulesLong}{coloring rules}
\newcommand{\crule}{\crulenospace~}
\newcommand{\crules}{\crulenospace s~}
\newcommand{\crulesnospace}{\crulenospace s}
\newcommand{\capgencrules}{General Coloring Rules}
\newcommand{\gencrulenospace}{GCR}
\newcommand{\gencruleLong}{general coloring rule}
\newcommand{\gencrulesLong}{general coloring rules}
\newcommand{\gencrule}{\gencrulenospace~}
\newcommand{\gencrules}{\gencrulenospace s~}
\newcommand{\gencrulesnospace}{\gencrulenospace s}
\newcommand{\xleq}{\preceq_{\overline{A}}}
\newcommand{\assny}{\overline{B}}
\newcommand{\assnc}{\overline{C}}
\newcommand{\case}[1]{\textbf{Case #1.}}
\newcommand{\numpoints}{n}
\newcommand{\inumpoints}{i}
\newcommand{\idegr}{j}
\newcommand{\numcolors}{c}
\newcommand{\commonconst}{a}
\newcommand{\Commonconst}{A}
\newcommand{\rareconst}{b}
\newcommand{\Rareconst}{B}
\theoremstyle{plain}
\newtheorem{theorem}{Theorem}[section]
\newtheorem{lemma}[theorem]{Lemma}
\theoremstyle{definition}
\newtheorem{definition}[theorem]{Definition}
\title{Big Ramsey Degrees of Countable Ordinals}
\author{Joanna Boyland\thanks{Supported by NSF grant 2150382.}\\
\small Department of Philosophy\\[-0.8ex]
\small Carnegie Mellon University \\[-0.8ex] 
\small Pittsburgh, PA, 15213 \\
\small\tt joannabo@andrew.cmu.edu
\and
William Gasarch\footnotemark[1]\\
\small Department of Computer Science\\[-0.8ex]
\small University of Maryland at College Park\\[-0.8ex]
\small College Park, MD, 20742\\
\small\tt gasarch@umd.edu
\and
Nathan Hurtig\footnotemark[1]\\
\small School of Computer Science \& Engineering\\[-0.8ex]
\small University of Washington\\[-0.8ex]
\small Seattle, WA, 98195\\
\small\tt hurtig@cs.washington.edu
\and
Robert Rust\footnotemark[1]\\
\small Department of Mathematics and Statistics\\[-0.8ex]
\small Binghamton University\\[-0.8ex]
\small Binghamton, NY, 13902\\
\small\tt rrust@binghamton.edu}
\begin{document}

\maketitle

\begin{abstract}


Ramsey's theorem on countable infinite sets states that for all natural numbers $n,$ for all finite colorings of the $n$-element subsets of some infinite countable set, there exists an infinite countable homogeneous subset. What if we seek a homogeneous subset that is also order-equivalent to the original set?
Let $S$ be a linearly ordered set and $\numpoints \in\N$.  
The big Ramsey degree of $\numpoints$ in $S$, denoted $T(\numpoints,S)$,  is the least natural number $t$ such that, for any finite coloring of the size $\numpoints$ subsets of $S$, there exists $S'\subseteq S$ such that (i) $S'$ is order-equivalent to $S$, and (ii) if the coloring is restricted to the size $\numpoints$ subsets of $S'$ then at most $t$ colors are used. 

Ma\v{s}ulovi\'{c} \& \v{S}obot (2021) showed that
$T(\numpoints,\omega+\omega)=2^{\numpoints}$. From this one can obtain $T(\numpoints,\zeta)=2^{\numpoints},$ where
$\zeta$ is the ordered set of integers.
We give a direct proof that $T(\numpoints,\zeta)=2^{\numpoints}$. 

Ma\v{s}ulovi\'{c} and \v{S}obot (2021) also showed that for all
countable ordinals $\alpha < \om^\om$ and $\numpoints \in \N$, $T(\numpoints,\alpha)$  is finite. 
We find exact values of $T(\numpoints,\alpha)$ for all ordinals $\alpha$ less than $\om^\om$ and all $\numpoints \in\N$.

\end{abstract}

\section{Introduction}

\begin{definition}
Let $\N$ be the set of natural numbers, including 0. 
Let $\numpoints,\numcolors \in \N$ and $S$ be a set. Then we define $[\numcolors]$ to be $ \{ 0, \ldots, \numcolors-1\}$
(note $0 \in [\numcolors]$, but $\numcolors \not \in [\numcolors]$). Note that if $\numcolors=0$ then $[\numcolors] = \emptyset$. 
We use $\binom{S}{\numpoints}$ to denote the set of all $\numpoints$-element subsets of $S$. 
\end{definition}
\begin{definition}
    A function $\COL\colon \binom{S}{\numpoints} \to [\numcolors]$ is called a \textit{coloring}.
    A set $H\subseteq S$ is called \textit{homogeneous} if $\COL$ restricted to 
    $\binom{H}{\numpoints}$ is constant,
    that is, $\left|\COL\left(\binom{H}{\numpoints}\right)\right|\le 1$.
\end{definition}

Ramsey's Theorem on infinite countable sets \cite{Ramsey-1930} is as follows:
\begin{theorem} 
Let $\numpoints,\numcolors\in\N,$ and let $S$ be an infinite countable set.  
For all colorings $\COL\colon \binom{S}{\numpoints}\to [\numcolors]$ 
there exists an infinite homogeneous set $H\subseteq S$. 
\end{theorem}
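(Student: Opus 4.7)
The plan is to proceed by induction on $\numpoints$. For the base case $\numpoints=1$, a coloring of singletons partitions $S$ into at most $\numcolors$ classes, so since $S$ is infinite and $\numcolors$ is finite, the pigeonhole principle produces an infinite class, which is trivially homogeneous.

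For the inductive step, assume the theorem for $\numpoints$ and prove it for $\numpoints+1$. Given $\COL\colon\binom{S}{\numpoints+1}\to[\numcolors]$, I would build, by recursion on $i\in\N$, a sequence of distinct points $x_0,x_1,x_2,\ldots\in S$ together with a nested chain of infinite subsets $S=S_0\supseteq S_1\supseteq S_2\supseteq\cdots$. At stage $i$, having constructed $S_i$, pick any $x_i\in S_i$ and apply the inductive hypothesis to the coloring $A\mapsto\COL(\{x_i\}\cup A)$ defined on $\binom{S_i\setminus\{x_i\}}{\numpoints}$ to obtain an infinite homogeneous set $S_{i+1}\subseteq S_i\setminus\{x_i\}$ with associated constant color $d_i\in[\numcolors]$. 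By construction, for every $\numpoints$-subset $A\subseteq S_{i+1}$ the color $\COL(\{x_i\}\cup A)$ equals $d_i$.

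Since each $d_i\in[\numcolors]$ and $[\numcolors]$ is finite, a second pigeonhole application yields a color $d\in[\numcolors]$ together with an infinite index set $I\subseteq\N$ such that $d_i=d$ for every $i\in I$. I would then propose $H=\{x_i : i\in I\}$ as the desired homogeneous set. For any $(\numpoints+1)$-subset of $H$, writing its elements as $x_{i_0},x_{i_1},\ldots,x_{i_\numpoints}$ with $i_0<i_1<\cdots<i_\numpoints$ all in $I$, the remaining points $x_{i_1},\ldots,x_{i_\numpoints}$ all lie in $S_{i_0+1}$ by the nesting, so the color of the $(\numpoints+1)$-subset equals $d_{i_0}=d$.

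The main technical worry is that the recursive construction could stall if some $S_i$ failed to be infinite, and the whole argument hinges on this not happening; this is precisely where the inductive hypothesis pays off, since it guarantees an infinite residual $S_{i+1}$ at every stage. The construction is essentially a diagonalization in which each new $x_i$ is chosen to absorb all future coloring information involving it, and the final step converts this absorbed information into genuine homogeneity via the finiteness of $[\numcolors]$.
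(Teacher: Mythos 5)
Your proof is correct: it is the classical inductive argument for the infinite Ramsey theorem (pigeonhole base case, then the ``pre-homogeneous'' construction of a nested chain $S_0\supseteq S_1\supseteq\cdots$ with distinguished points $x_i$, followed by a second pigeonhole on the colors $d_i$), and every step, including the non-stalling of the recursion and the verification that $x_{i_1},\ldots,x_{i_\numpoints}\in S_{i_0+1}$, goes through. The paper does not prove this statement at all --- it cites Ramsey (1930) --- so there is nothing to compare against; the only cosmetic remark is that the paper's convention allows $\numpoints=0$, for which the claim is trivial since $\binom{S}{0}$ is a singleton.
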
 

If $S$ is an \textit{ordered} set, then Ramsey's Theorem does not guarantee a
homogeneous set of the same order type. Instead, we might be able to get a set $H\subseteq S$ of the same order type where $\left|\COL\left(\binom{H}{\numpoints}\right)\right|$ is bounded by some constant. Ma\v{s}ulovi\'{c} and \v{S}obot~\cite{MS-2021} prove the following:

\begin{theorem} 
Let $\numpoints,\numcolors\in\N,$ and let $S$ be a set
of order type $\zeta$ (the integers).
For all colorings $\COL\colon \binom{S}{\numpoints}\to [\numcolors]$ 
there exists an infinite homogeneous set $H\subseteq S$ such that 
$\left|\COL\left(\binom{H}{\numpoints}\right)\right|\le 2^n$. Note that $2^n$ is independent of 
$\numcolors$. 
\end{theorem}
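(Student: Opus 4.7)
The plan is to identify $S$ with $\Z$ and split it as $L \cup R$, with $L := \{x \in \Z : x < 0\}$ of order type $\omega^*$ and $R := \{x \in \Z : x \ge 0\}$ of order type $\omega$. Every $\numpoints$-subset $X \in \binom{\Z}{\numpoints}$ has a \emph{split type} $k(X) := |X \cap L| \in \{0, 1, \ldots, \numpoints\}$, partitioning $\binom{\Z}{\numpoints}$ into $\numpoints + 1$ classes. For each $k$, the restriction of $\COL$ to the $k$-th class corresponds to a two-parameter coloring $\COL_k \colon \binom{L}{k} \times \binom{R}{\numpoints - k} \to [\numcolors]$.

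Next I would find an infinite $L' \subseteq L$ (of order type $\omega^*$) and an infinite $R' \subseteq R$ (of order type $\omega$) such that, for every $k \in \{0, 1, \ldots, \numpoints\}$, the coloring $\COL_k$ is constant on $\binom{L'}{k} \times \binom{R'}{\numpoints - k}$. Once this is done, $H := L' \cup R'$ has order type $\omega^* + \omega = \zeta$ (hence is order-equivalent to $S$), and $|\COL(\binom{H}{\numpoints})| \le \numpoints + 1 \le 2^{\numpoints}$; the last inequality holds for $\numpoints \ge 1$, and the case $\numpoints = 0$ is trivial. The construction of $L', R'$ is iterative: starting from $L^{(0)} := L$, $R^{(0)} := R$, for each $k = 0, 1, \ldots, \numpoints$ I apply a product-Ramsey argument to $\COL_k$ on $(L^{(k)}, R^{(k)})$ to obtain infinite $L^{(k+1)} \subseteq L^{(k)}$ and $R^{(k+1)} \subseteq R^{(k)}$ on which $\COL_k$ is constant. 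Nesting preserves the constant-color property of all earlier steps, so $L' := L^{(\numpoints + 1)}$ and $R' := R^{(\numpoints + 1)}$ work.

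The hard part is the product-Ramsey step for intermediate $0 < k < \numpoints$; the extreme cases $k = 0$ and $k = \numpoints$ reduce directly to Ramsey's theorem applied to $R$ and $L$ respectively. For general $k$, the statement required is: given any coloring $\COL_k \colon \binom{L}{k} \times \binom{R}{\numpoints - k} \to [\numcolors]$ with $L, R$ infinite, there exist infinite $L'' \subseteq L$ and $R'' \subseteq R$ on which $\COL_k$ is constant. This is a standard consequence of Ramsey's theorem, provable by iterated application: enumerate $L$ and $R$, and build $L''$ and $R''$ by a diagonal construction that alternately applies Ramsey to one side (with the other side's relevant partial subset fixed) and then collapses the resulting colors via one last Ramsey/pigeonhole pass. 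The bookkeeping of this diagonalization is the only real technical burden; once it is in place, the rest of the argument is a clean partition-and-refine yielding the $\numpoints + 1 \le 2^{\numpoints}$ bound.
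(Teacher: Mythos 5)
There is a genuine gap, and it is fatal: the ``product-Ramsey'' step you rely on is false. Already for $k = \numpoints-k = 1$ it asserts the polarized partition relation: every finite coloring of $L \times R$ admits infinite $L'' \subseteq L$ and $R'' \subseteq R$ with $L'' \times R''$ monochromatic. A counterexample is the coloring of $(x,y) \in L \times R$ by whether $|x| \le |y|$ or $|x| > |y|$: for any infinite $L''$ and $R''$ both colors occur on $L'' \times R''$ (fix $x$ and take $y$ with $|y|$ large, or fix $y$ and take $x$ with $|x|$ large). This is exactly the mechanism behind colors $2$ and $3$ in the paper's proof that $T(2,\zeta)\ge 4$. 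Indeed, if your argument worked it would give $|\COL(\binom{H}{\numpoints})| \le \numpoints+1$, i.e.\ $T(\numpoints,\zeta)\le \numpoints+1$, contradicting the matching lower bound $T(\numpoints,\zeta)=2^{\numpoints}$ established in the paper (already $T(2,\zeta)=4>3$). So no amount of bookkeeping in the diagonalization can rescue that step.

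The repair is to weaken the goal from ``$\COL_k$ is constant on the product'' to ``$\COL_k$ depends only on the interleaving pattern of the absolute values of the $k$ left points among the $\numpoints-k$ right points.'' There are $\binom{\numpoints}{k}$ such patterns for each split type $k$, and $\sum_{k}\binom{\numpoints}{k}=2^{\numpoints}$, which is where the true bound comes from. The paper implements this in one stroke: it defines $\COL'$ on $\binom{\omega}{\numpoints}$ whose value at $x_1<\cdots<x_{\numpoints}$ is the $2^{\numpoints}$-tuple of $\COL$-values over all sign patterns of $(x_1,\ldots,x_{\numpoints})$, applies Ramsey's theorem once to get a homogeneous $H'=\{h_0<h_1<\cdots\}$, and then takes $H$ to consist of $-h_i$ for even $i$ and $h_i$ for odd $i$, so that all absolute values in $H$ are distinct and every $\numpoints$-subset of $H$ is covered by some coordinate of $\COL'$.
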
 

Our main results are about the colorings of finite subsets of countable ordinals less than $\omega^\omega$. 
We provide necessary definitions in Section~\ref{se:defs},
and summarize our results rigorously in Section~\ref{se:summary}. 

\section{Definitions}\label{se:defs}

\begin{definition} 
Let $\mathcal{A} = (A,\preceq_A)$ and $\mathcal{B} = (B,\preceq_B)$ be ordered sets, where
$\preceq_A, \preceq_B$ are total orders on $A, B$ respectively.

\begin{enumerate}

\item Two ordered sets
$\mathcal{A},\mathcal{B}$ are \textit{order-equivalent}, denoted $\mathcal{A}\oequiv \mathcal{B}$, 
if there exists an order-preserving bijection $f\colon A \to B$ such that, for all $a_1, a_2 \in A$: 

$$a_1 \preceq_A a_2 \iff f(a_1) \preceq_B f(a_2).$$

\item 
The ordered set $\mathcal{A} \setconcat \mathcal{B}$ is defined to be
$(A\sqcup B, \preceq),$ where the disjoint union $A\sqcup B$ is formally defined to be
$(\{0\} \times A) \, \cup\, (\{1\} \times B)$, and where $\preceq$ is the dictionary ordering on $A\sqcup B.$
Note that $\setconcat$ agrees with the definition of ordinal addition, and is not commutative in general.

\item 
Let $\mathcal{A}=(A,\preceq_A)$ be an ordered set with $A \subseteq \Z$. Then define $-\mathcal{A}$ to be $(-A,\preceq_{-A})$, where 
$-A = \{ -a \colon a \in A\}$, and
$-a \preceq_{-A} -b \iff b \preceq_{A} a$.
\end{enumerate} 

\end{definition} 

Throughout this paper, we conflate the notation for an ordered set and its underlying set. 
For example, if $A$ is an ordered set, we might still use the notation $\binom{A}{2}$.

\begin{definition}\label{def:homogeneous}
Let $S$ be an ordered set, $S'\subseteq S$, and $\numpoints,\numcolors,t \in \N.$ Let $\COL \colon \binom{S}{\numpoints} \to [\numcolors]$ be a coloring.
\begin{enumerate}
    \item 
$S'$ is \textit{homogeneous} if  $\left|\COL\left(\binom{S'}{\numpoints}\right)\right|= 1$. 
\item 
 $S'$ is \textit{$t$-homogeneous} if $\left|\COL\left(\binom{S'}{\numpoints}\right)\right|\leq t$. 
 \item 
$S'$ is $S$-\textit{$t$-homogeneous} if both $S'$ is $t$-homogeneous and $S'\oequiv S$.
\end{enumerate}
\end{definition}

\begin{definition}
We identify every ordinal $\alpha$ as the ordered set of all ordinals $\beta < \alpha$, with 0 as the least ordinal. Additionally, let $\omega$ be the ordered set containing only the naturals, $\zeta$ be the ordered set containing only the integers, and $\eta$ be the ordered set containing only the rationals, all under their respective natural orderings.

\end{definition}

\begin{definition}\label{def:T}
Let $S$ be an ordered set.
For any $\numpoints\in \N$, $T(\numpoints,S)$
is the least $t\in \N$ such that for all constants $\numcolors \geq 1$ and
colorings  $\COL \colon \binom{S}{\numpoints} \to [\numcolors]$, there exists some $S' \subseteq S$ 
such that $S'$ is $S$-$t$-homogeneous.
Note that $t$ is independent of $\numcolors$.
If no such $t$ exists then we define $T(\numpoints,S)=\infty$. 
The quantity $T(\numpoints,S)$ is called the \textit{big Ramsey degree} of $\numpoints$ in $S$.
The term was first coined by Kechris et al.~\cite{KPT-2005}. Other sources use the Ramsey arrow notation modified for ordered sets. In that notation, $T(\numpoints,S)=t$ is short-hand for both
$S \to (S)^{\numpoints}_{\numcolors,t}$ and $S \not \to
(S)^{\numpoints}_{T(\numpoints,S),t-1}$ for all $\numcolors \in \N.$
We use the $T(n, S)$ notation in this paper.
\end{definition}

The following stems from Definition~\ref{def:T}, but we include it for completeness.
\begin{lemma}\label{lem:choose0}
    For all ordered sets $S,$ $T(0,S) = 1.$ 
\end{lemma}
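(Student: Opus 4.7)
The plan is to unwind Definition~\ref{def:T} in the edge case $\numpoints=0$. The starting observation is that $\binom{S}{0}$ consists of exactly one element, namely the empty set. Consequently, for any coloring $\COL\colon \binom{S}{0}\to[\numcolors]$ and any subset $S'\subseteq S$, we have $\binom{S'}{0}=\{\emptyset\}$, so $\COL$ restricted to $\binom{S'}{0}$ attains the single value $\COL(\emptyset)$, giving $|\COL(\binom{S'}{0})|=1$.

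For the upper bound $T(0,S)\leq 1$, I would take $S'=S$. Then $S'\oequiv S$ via the identity map, and by the observation above $S'$ is $1$-homogeneous; hence $S$ itself witnesses $S$-$1$-homogeneity against every coloring, for every $\numcolors\geq 1$.

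For the lower bound $T(0,S)\geq 1$, I would use the fact that Definition~\ref{def:T} restricts to $\numcolors\geq 1$, so at least one coloring $\COL$ exists. Since $|\COL(\binom{S'}{0})|=1$ for every choice of $S'\subseteq S$, no $t=0$ can satisfy the requirement in the definition, ruling out $T(0,S)=0$.

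Combining the two bounds yields $T(0,S)=1$. Since the whole argument is just an unpacking of definitions in a degenerate case, there is no real obstacle; the only mild subtlety is remembering that the definition of $T$ is restricted to $\numcolors\geq 1$, without which the lower bound would fail vacuously.
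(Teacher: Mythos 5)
Your proof is correct: since $\binom{S'}{0}=\{\emptyset\}$ is nonempty for every $S'\subseteq S$, taking $S'=S$ gives exactly one color (so $T(0,S)\leq 1$), and no subset can realize zero colors (so $T(0,S)\geq 1$). The paper omits the proof entirely, remarking only that the lemma ``stems from Definition~\ref{def:T},'' and your argument is precisely the definitional unwinding they had in mind.
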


Ramsey's Theorem on $\N$ gives an infinite 1-homogeneous subset of $\N$. Theorem~\ref{th:ramsey} restates Ramsey's Theorem using big Ramsey degree notation.

\begin{theorem}\label{th:ramsey}
For all $\numpoints \in \N,$
$T(\numpoints,\omega)=1.$
\end{theorem}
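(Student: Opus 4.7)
The plan is to derive Theorem~\ref{th:ramsey} almost immediately from the infinite Ramsey theorem that has already been quoted, by verifying that a homogeneous set produced by Ramsey's theorem automatically has the right order type.

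First I would handle the lower bound $T(\numpoints,\omega)\geq 1$. By Definition~\ref{def:T}, getting $T(\numpoints,\omega)=0$ would require $|\COL(\binom{S'}{\numpoints})|\leq 0$, i.e.\ $\binom{S'}{\numpoints}=\emptyset$. For any $S'\oequiv\omega$ the set $S'$ is infinite, so $\binom{S'}{\numpoints}\neq\emptyset$ as soon as $\numpoints\geq 1$; the $\numpoints=0$ case is already covered by Lemma~\ref{lem:choose0}. Hence any candidate $t$ must be at least $1$.

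For the upper bound, fix $\numcolors\in\N$ and a coloring $\COL\colon\binom{\omega}{\numpoints}\to[\numcolors]$. Applying the statement of Ramsey's theorem on infinite countable sets quoted earlier to $S=\omega$ yields an infinite homogeneous $H\subseteq\omega$, so $|\COL(\binom{H}{\numpoints})|\leq 1$. The key observation is then that every infinite subset of $\omega$ is order-equivalent to $\omega$: listing the elements of $H$ in increasing order $h_0<h_1<h_2<\cdots$ gives an order-preserving bijection $i\mapsto h_i$ from $\omega$ to $H$, so $H\oequiv\omega$. Therefore $H$ is $\omega$-$1$-homogeneous, and $T(\numpoints,\omega)\leq 1$.

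There is no real obstacle; the only step that is not completely formal is the verification that an infinite subset of $\omega$ is order-isomorphic to $\omega$, which is immediate from the well-ordering of $\omega$ by recursively picking minima. Combining the two bounds gives $T(\numpoints,\omega)=1$.
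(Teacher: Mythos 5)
Your proposal is correct and follows exactly the route the paper intends: the paper offers no explicit proof, remarking only that the theorem restates Ramsey's Theorem in big Ramsey degree notation, and your argument supplies the two routine details (any infinite subset of $\omega$ is order-equivalent to $\omega$, and $t=0$ is impossible for $\numpoints\geq 1$) that make that restatement rigorous.
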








\section{Related Work and our Work} \label{se:summary}

We summarize what is known about the big Ramsey degrees of linear orders,
and what our contribution is.
We also briefly discuss big Ramsey degrees for other structures.
We then give a more detailed account of our contributions.
All of the structures in this section are countable.

\subsection{Big Ramsey Degrees of Linear Ordering and Other Structures}

The table below summarizes what is known about the big Ramsey degrees of
ordinals. We use $\beta$ to denote an arbitrary ordinal and $n$ for an arbitrary positive number.

\begin{center}
\begin{tabular}{|c|ll|} 
\hline
Ordinal & Known Results & Comments \cr
\hline
$\om$ & $T(n,\om)=1$~\cite{Ramsey-1930} & Infinite Ramsey Theorem \cr 
$\om^\beta$ & $T(1,\om^\beta)=1$~\cite{Frasse-2000} (page 189) & \cr
$\beta$ & $T(1,\beta)<\infty$~\cite{Frasse-2000} (page 189) & \cr
$\beta<\om^\om$ & $T(n,\beta)<\infty$~\cite{MS-2021} & We provide exact numbers \cr
$\beta\geq\om^\om$ & $(\forall n\ge 2)[T(n,\beta)=\infty]$~\cite{MS-2021} &  \cr
\hline
\end{tabular}
\end{center}

\begin{definition}
Let $L$ be a countable scattered linear ordering.
Then ${\rm HR}(L)$ denotes the Hausdorff rank of $L$. 

\end{definition}

In the table below,
the $i$th Bernoulli number is denoted by $B_i$.
We use $L$ to denote an arbitrary countable  scattered linear ordering.
\begin{center}
\begin{tabular}{|c|ll|} 
\hline
Ordering & Known Results & Comments \cr
\hline
$\zeta$  & $T(n,\zeta)=2^n$~\cite{MS-2021} & We provide an easier proof \cr 
$\eta$   & $T(n,\eta) = \frac{B_{2n+1}(-1)^{n+1}(1-4^{n+1})}{(2(n+1))!}$~\cite{Devlin-1979,Vuk-2002}  & \cr
$L$ & $T(1,L)<\infty$~\cite{Laver-1973} & \cr 
$L$ & $(\forall n \ge 2)[ {\rm HR}(L)=\infty \iff T(n,L)=\infty]$~\cite{BMN-2023,Masulovic-2023} & \cr 
\hline
\end{tabular}
\end{center} 

Big Ramsey degrees have also been studied for structures more complex than
linear orders:
\begin{enumerate} 
\item 
the generic partial order~\cite{BCDHKVZ-2023,Hubicka-2020},
\item 
the Rado graph~\cite{EHP-1975,LSV-2006,Larson-2008,PS-1996,Sauer-2006},
\item 
the Rado 
hypergraph~\cite{BCHKV-2020,BCRHKK-2023,CDP-2022-ind,CDP-2022-simply}, and
\item 
the Henson graph~\cite{Dobrinen-2020-I,Dobrinen-2020-II,Dobrinen-2023}.
\end{enumerate} 

More abstract versions of these questions have also been 
studied~\cite{Masulovic-2020,The-2009,Zucker-2022}.
For a survey and more references, see Dobrinen~\cite{Dobrinen-2022} and 
Coulson et al.~\cite{CDP-2022-ind,CDP-2022-simply}.
Page 52 of~\cite{CDP-2022-simply} has a table of known results.
For a collection of open problems, see
Dobrinen and Gasarch~\cite{dobrinen-open-2020}.

Exact values are known for
coloring edges of the Rado graph:
upper bounds by
Pouzet \& Sauer~\cite{PS-1996} and
lower bounds by 
Erd\H{o}s, Hajnal and Posa~\cite{EHP-1975}. 
When coloring arbitrary sets of $n$ vertices in
the Rado graph,
exact values are known: see Leflamme, Sauer, and
Vuksanovic~\cite{LSV-2006} and Larson~\cite{Larson-2008}.
For all $a\ge 3$, the Rado $a$-hypergraph has finite big Ramsey degree;
however, no exact numbers are known. 
For all $k$, the $k$-Henson graph has finite big Ramsey degree. The exact big Ramsey degrees of 3-Henson graphs are known, see Dobrinen~\cite[Corollary 5.13]{Dobrinen-2020-II}. Dobrinen has made a conjecture about the Ramsey degree of $k$-Henson graphs~\cite[Section 11: Future Directions]{Dobrinen-2023},
and also supplies some numerical results. 

\subsection{Summary of Results}


Our work is most similar to that of Ma\v{s}ulovi\'{c} \& \v{S}obot~\cite{MS-2021},
who study the conditions under which $T(n,\beta)$ is finite. Their methods
rely on what they call a top-down approach, where they prove that the
property of finite big Ramsey degrees is passed from ordinals of the form $\om^n$
to their subsums of lesser ordinals. Our methods are bottom-up: we provide explicit formulas
for calculating the degree of an ordinal number from its subsums. This enables
us to compute exact results, unlike the work of Ma\v{s}ulovi\'{c} \& \v{S}obot.

In other words, ordinal
numbers less than $\om^\om$ can be viewed as summations of smaller
and simpler ordinals of the form $\om^n$. Our main result relies on the
big Ramsey degrees of these simpler ordinals to determine the big Ramsey
degrees of all ordinals less than $\om^\om.$

In this paper, we do the following.

\begin{enumerate}
\item
We show that $T(\numpoints,\zeta)=2^{\numpoints}$ in Section~\ref{se:Z}. This can be obtained
by the result due to Ma\v{s}ulovi\'{c} \& \v{S}obot~\cite{MS-2021} that 
$T(\numpoints,\omega+\omega)=2^\numpoints$, but we give a direct proof using different methods.
\item
We find the big Ramsey degrees $T(n, \om \cdot k)$ for finite $k$ in Section~\ref{se:aomega}.
\item
We find the big Ramsey degree $T(2, \om^2)$ in Section~\ref{se:omega2} 
and introduce our main method for finding big Ramsey degrees of ordinals
in Section~\ref{se:strong}.
\item We find the big Ramsey degrees $T(n, \om^d)$ and
$T(n, \om^d \cdot k)$ for finite $d,k$ in Sections~\ref{se:ord} and~\ref{se:ord2}.
\item We find the big Ramsey degrees of all ordinals $\alpha<\omega^\omega$
in Section~\ref{se:done}.
\end{enumerate}





\section{Big Ramsey Degrees of \texorpdfstring{$\zeta$}{Z}}\label{se:Z}

Our general result is $T(n, \zeta) = 2^n$.
We first prove $T(1,\zeta)=2$
and $T(2,\zeta)=4$. Both of these will 
build notation and structure toward the main result.

\begin{theorem}
The big Ramsey degree $T(1,\zeta)=2$. 
\end{theorem}

\begin{proof}
We first prove $T(1,\zeta)\leq 2$. Let $\numcolors \in \N$ and let $$\COL\colon \zeta\to [\numcolors]$$ be a 
$\numcolors$-coloring of $\zeta$.
Let $\COL'\colon \omega \to [\numcolors]\times[\numcolors]$ be defined by 

$$\COL'(x) = (\COL(-x),\COL(x)).$$

By Theorem~\ref{th:ramsey} (Ramsey's theorem), there exists a set $G \subseteq \om$
that is $\om$-1-homogeneous with respect to $\COL'$. Let the color of the homogeneous set be $(c_1, c_2)$.
Consider the set $H=-G \setconcat G$, which is order-equivalent to $\zeta$. Let $h \in H$. If $h 
\in G$ then by the definition of $\COL'$ and 1-homogeneity of $G$, $\COL(h)=c_2$. Otherwise, $h\in -G$, and so $\COL(h)=c_1$. Thus $H$ is $\zeta$-2-homogeneous. Because $\COL$ was arbitrary, $T(1,\zeta)\leq 2$.

We now prove $T(1,\zeta)\geq 2$. Let $\COL\colon\zeta\to [2]$ be the function
that maps all nonnegative integers to $0$ and all
negative integers to $1$.
Since the nonnegative integers have no infinitely descending chain and the negative integers have no infinitely ascending chain, there is no $\zeta$-1-homogeneous subset of $\zeta$ under $\COL$. Therefore $T(1,\zeta)\geq 2$, and with the previous result, $T(1,\zeta) = 2$.
\end{proof}

\begin{theorem}\label{the:Z2}
The big Ramsey degree $T(2,\zeta)=4$. 
\end{theorem}
\begin{proof}
We first prove $T(2,\zeta)\leq 4$. Let $\numcolors \in \N$ and let $$\COL\colon \binom{\zeta}{2}\to [\numcolors]$$
be a $\numcolors$-coloring of $\binom{\zeta}{2}$.
Let $\COL'\colon \binom{\omega}{2} \to [\numcolors]^4$ be defined by 

$$
\COL'(x,y) = (\COL(-x,-y), \COL(-x,y), \COL(x,-y), \COL(x,y))
$$ where $x < y.$

By Theorem~\ref{th:ramsey} there exists an $\om$-1-homogeneous set $G$. 
Let the color of the homogeneous set be $(c_1,c_2,c_3,c_4)$.
Let $G = \{h_0 < h_1 < \cdots \}$. 

Then consider the set $$H= \{-h_i \colon i \textnormal{ is even}\} \setconcat \{h_i \colon i \textnormal{ is odd}\},$$ which is order-equivalent to $\zeta$. Let $s_in_i, s_jn_j \in H,$ where $n_i,n_j\geq 0$ and $s_i,s_j \in \{-1,1\}$. Then by definition of $\COL'$, $\COL(s_in_i,s_jn_j)$ is in $\{c_1, c_2, c_3, c_4\}$, depending on $s_i$ and $s_j$. As an aside, this method only works when $n_i$ and $n_j$ are guaranteed to be distinct, which we forced with our alternation of sign by the parity of index.
Since $\COL(H)\subseteq \{c_1,c_2,c_3,c_4\}$, $H$ is $\zeta$-4-homogeneous.
Therefore $T(2,\zeta)\leq 4$.

We now prove $T(2,\zeta)\geq 4$. Let $\COL\colon\binom{\zeta}{2}\to [4]$ be the coloring
\begin{align*}
\COL(x,y)=
\begin{cases}
0 & \text{$x,y\geq 0$} \cr
1 & \text{$x\geq 0$, $y<0$, and $|x|\leq |y|$} \cr
2 & \text{$x\geq 0$, $y<0$, and $|x|> |y|$} \cr
3 & \text{$x<0$, $y<0$.} \cr
\end{cases}
\end{align*}
Let $H$ be an arbitrary order-equivalent subset of $\zeta.$
The following $x,y$ are guaranteed to exist because $H \subseteq \zeta$ and $H \oequiv \zeta$:
\begin{enumerate} 
\item
Let $x,y\in H$ such that $x,y\ge 0$. Then $\COL(x,y)=0$. 
\item
Let $x,y\in H$ such that $x\ge 0$, $y<0$, and $|x|\le |y|$. Then $\COL(x,y)=1$. 
\item 
Let $x,y\in H$ such that $x\ge 0$, $y<0$, and $|x|>|y|$. Then $\COL(x,y)=2$. 
\item
Let $x,y\in H$ such that $x<0$, $y<0$. Then $\COL(x,y)=3$. 
\end{enumerate}  
Therefore $T(2,\zeta)\geq 4$, and with the previous result, $T(2,\zeta) = 4$.
\end{proof}

The proof of Theorem~\ref{the:Z2} exposes the common structure of all the proofs in this
paper. We identify $T(n, \alpha)$ ways to reshape some naturals into specific formats, and
define a $\COL'$ using those cases. After applying Ramsey's theorem to yield some
order-equivalent subset of naturals $G,$ we rebuild some
order-equivalent $H$ whose coloring under $\COL$ is guaranteed by the $T(n, \alpha)$ cases.
Our upper bound proofs use the same cases again.

\begin{theorem}\label{th:zeta}
For all $\numpoints \in \N$, $T(\numpoints,\zeta)=2^{\numpoints}$. 
\end{theorem}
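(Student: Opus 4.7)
The plan is to generalize the proofs of $T(1,\zeta)=2$ and $T(2,\zeta)=4$ directly, with $2^{\numpoints}$ arising as the number of sign patterns in $\{-1,+1\}^{\numpoints}$.

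For the upper bound, given a coloring $\COL \colon \binom{\zeta}{\numpoints} \to [\numcolors]$, I would define an auxiliary coloring $\COL' \colon \binom{\om \setminus \{0\}}{\numpoints} \to [\numcolors]^{2^{\numpoints}}$ whose coordinates, indexed by sign patterns $\epsilon \in \{-1,+1\}^{\numpoints}$, record $\COL(\{\epsilon_1 x_1, \ldots, \epsilon_{\numpoints} x_{\numpoints}\})$ for each $\{x_1 < \cdots < x_{\numpoints}\}$. (Removing $0$ avoids sign ambiguity.) By Theorem~\ref{th:ramsey} there is an $\om$-1-homogeneous set $H' = \{h_0 < h_1 < \cdots\}$ whose constant $\COL'$-color is a tuple $(c_\epsilon)_{\epsilon \in \{-1,+1\}^{\numpoints}}$. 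Then set $H = \{-h_i : i \text{ even}\} \setconcat \{h_i : i \text{ odd}\}$, so $H \oequiv \zeta$. Because the indices used for the negative and positive parts are disjoint, any $\numpoints$-subset of $H$ has pairwise distinct absolute values; sorting the subset by absolute value recovers a sign pattern $\epsilon$, and the $1$-homogeneity of $H'$ forces the $\COL$-color of the subset to be $c_\epsilon$. Hence $\COL$ takes at most $2^{\numpoints}$ values on $\binom{H}{\numpoints}$, giving $T(\numpoints,\zeta) \leq 2^{\numpoints}$.

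For the lower bound, I would construct $\COL \colon \binom{\zeta}{\numpoints} \to [2^{\numpoints}]$ as follows: for any set $S = \{a_1, \ldots, a_{\numpoints}\}$ whose $|a_i|$ are pairwise distinct and nonzero, let $\COL(S)$ be the sign pattern obtained by sorting the $a_i$ in order of increasing absolute value, identified with an element of $[2^{\numpoints}]$; assign any fixed color to degenerate sets (those containing $0$ or some pair $\{-k,k\}$). Given any $H \oequiv \zeta$, the sets $H \cap \Z_{>0}$ and $H \cap \Z_{<0}$ are both infinite with unbounded magnitudes, so for any target pattern $\epsilon \in \{-1,+1\}^{\numpoints}$ one can greedily pick $\numpoints$ elements of $H$, taking the $k$-th element from the positive or negative side of $H$ according to $\epsilon_k$, with strictly increasing and pairwise distinct absolute values. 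This realizes every color, so $T(\numpoints,\zeta) \geq 2^{\numpoints}$.

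The main obstacle in both directions is the bookkeeping that makes the sign pattern well-defined. For the upper bound, one must verify that the parity-based interleaving forces distinct underlying $H'$-indices in every $\numpoints$-subset of $H$, so that the color is genuinely read off $\COL'$'s constant value on $H'$. For the lower bound, one must ensure the degenerate subsets can always be avoided when realizing a given sign pattern, which follows easily from the unboundedness of the positive and negative sides of $H$. Neither obstruction is deep; both steps are direct extensions of the $\numpoints=1,2$ arguments already carried out.
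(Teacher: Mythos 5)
Your proposal is correct and follows essentially the same route as the paper: the same sign-pattern auxiliary coloring on $\binom{\om}{\numpoints}$ combined with the parity-interleaved set $H=\{-h_i : i \text{ even}\}\setconcat\{h_i : i \text{ odd}\}$ for the upper bound, and the same absolute-value-sorted sign-pattern coloring for the lower bound (where you supply the realization argument that the paper leaves to the reader, and handle the tie/zero degeneracies by a default color rather than the paper's tie-breaking convention).
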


\begin{proof}
We first prove
$T(\numpoints,\zeta)\leq 2^\numpoints$. 
Let $\numcolors \in \N$ and let 
$\COL\colon \binom{\zeta}{\numpoints}\to [\numcolors]$ 
be a $\numcolors$-coloring of $\binom{\zeta}{\numpoints}$.
Let $\COL'\colon \binom{\omega}{n}\to [\numcolors]^{(2^n)}$ be defined by 
$$\COL'(x_1,\ldots,x_{\numpoints}) = (\COL(x_1,\ldots,x_{\numpoints}),\COL(-x_1,x_2,\ldots,x_{\numpoints}),\ldots,\COL(-x_1,\ldots,-x_{\numpoints}))$$ where $x_1 < x_2 < \cdots < x_{\numpoints}.$
Formally, $\COL'$ outputs a tuple with $\COL$'s output on each item in the set $$
\{-x_1,x_1\} \times \{-x_2,x_2\} \times \cdots \times \{-x_\numpoints, x_\numpoints\}.
$$
Note that $\COL'$ only depends on the color of elements of $\binom{\zeta}{\numpoints}$ where the absolute values of the integers are all different. 

By Theorem~\ref{th:ramsey} there exists some homogeneous set $G$. Let $G = \{h_0 < h_1 < \cdots \}$.
Then the set $$H= \{-h_i \colon i \textnormal{ is even}\} + \{h_i \colon i \textnormal{ is odd}\}$$ is $\zeta$-$2^{\numpoints}$-homogeneous. Because the absolute values of every element in $H$ are all different, each subset with size $\numpoints$ was considered by $\COL'$. Therefore $T(\numpoints,\zeta)\leq 2^{\numpoints}$.

We now prove $T(\numpoints,\zeta)\geq 2^{\numpoints}$.
We describe a coloring $\COL \colon \binom{\zeta}{\numpoints} \to [2]^{\numpoints}$.

Let $\{\commonconst_1<\cdots<\commonconst_{\numpoints}\} \in \binom{\zeta}{\numpoints}$.
Define an ordering $<^\ast$ as $x<^\ast y$ if $|x| < |y|$ or both $|x|=|y|$ and $x<y$ (we order by absolute values, and in case of ties, order the negative before the positive).

Let $(i_1,\ldots,i_{\numpoints})$ be such that 

$$\commonconst_{i_1} <^\ast  \commonconst_{i_2} <^\ast  \cdots <^\ast  \commonconst_{i_{\numpoints}}.$$

Let $s_{i_j}$ be $0$ if $\commonconst_{i_j}<0$ and $1$ if $\commonconst_{i_j}\geq 0$. 
We define $\COL(\{\commonconst_1<\cdots<\commonconst_{\numpoints}\})$ as $(s_{i_1},\ldots,s_{i_{\numpoints}})$. 
We use $2^{\numpoints}$ colors, as there are $\numpoints$ elements colored either $0$ or $1$ in the output
of $\COL.$
We leave it to the reader to show that there is
no $\zeta$-$(2^\numpoints-1)$-homogeneous set. 

\end{proof} 

\section{Big Ramsey Degrees of Finite Multiples of \texorpdfstring{$\om$}{omega}}\label{se:aomega} 

As noted in Theorem~\ref{th:ramsey}, $T(\numpoints,\omega)=1$.
In this and later sections we examine limit ordinals larger than $\omega$.
For simplicity in stating results, we save the big
Ramsey degrees of successor ordinals such as $\om + 1, \om+2, \ldots$ as well as some limit ordinals such as $\om^2 + \om$ for 
Section~\ref{se:done}.

Our first result shows that $T(\numpoints, \omega \cdot k) = k^{\numpoints}$ for all $\numpoints,k$ where at least one is nonzero. 
Lemma~\ref{lem:choose0} shows that 
$T(0, \omega \cdot 0) = T(0, \emptyset) = 1$.

\begin{theorem}\label{th:omka-lt}
For $\numpoints,k \in \N$ with at least one of $\numpoints,k$ nonzero,~$T(\numpoints,\om \cdot k) \leq k^{\numpoints}$.
\end{theorem}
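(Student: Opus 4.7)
The plan is to reduce to Ramsey's theorem on $\om$ (Theorem~\ref{th:ramsey}) by packaging, into a single product coloring on $\binom{\om}{\numpoints}$, every ``way'' of distributing $\numpoints$ chosen points of $\om$ among the $k$ copies of $\om$ that make up $\om \cdot k$. After applying Ramsey I will reassemble a copy of $\om \cdot k$ whose $\numpoints$-subsets are colored according to the resulting distribution pattern, of which there are exactly $k^{\numpoints}$.

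Concretely, I would identify $\om \cdot k$ with $[k] \times \om$ under the lexicographic order, so that the $i$th copy of $\om$ is $\{i\} \times \om$. Given $\COL \colon \binom{\om \cdot k}{\numpoints} \to [\numcolors]$, I define $\COL' \colon \binom{\om}{\numpoints} \to [\numcolors]^{k^{\numpoints}}$ coordinate-wise by
\[
\COL'(\{q_1 < \cdots < q_{\numpoints}\})_{(c_1, \ldots, c_{\numpoints})} = \COL\bigl(\{(c_1, q_1), \ldots, (c_{\numpoints}, q_{\numpoints})\}\bigr),
\]
which is well-defined because the distinctness of the $q_i$ forces the pairs on the right to be a genuine $\numpoints$-element set. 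Theorem~\ref{th:ramsey} yields an infinite $\COL'$-homogeneous $H' \subseteq \om$.

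To build the desired subset I would partition $H'$ into $k$ pairwise disjoint infinite pieces $H'_1, \ldots, H'_k$ and set $H = \bigcup_{i \in [k]} \{i\} \times H'_i$. Each slice is order-equivalent to $\om$ and the slices are stacked in order of $i$, so $H \oequiv \om \cdot k$. For any $S = \{(i_1, a_1), \ldots, (i_{\numpoints}, a_{\numpoints})\} \in \binom{H}{\numpoints}$, the partition forces the $a_j$ to be pairwise distinct; letting $\sigma$ be the permutation with $a_{\sigma(1)} < \cdots < a_{\sigma(\numpoints)}$ and unfolding the definition of $\COL'$ shows that
\[
\COL(S) = \COL'\bigl(\{a_{\sigma(1)}, \ldots, a_{\sigma(\numpoints)}\}\bigr)_{(i_{\sigma(1)}, \ldots, i_{\sigma(\numpoints)})},
\]
which by homogeneity of $H'$ depends only on the tuple $(i_{\sigma(1)}, \ldots, i_{\sigma(\numpoints)}) \in [k]^{\numpoints}$. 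There are $k^{\numpoints}$ such tuples, giving the desired bound.

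The one subtle step is the disjoint partition of $H'$. If one took $H = \bigcup_i \{i\} \times H'$ instead, a subset of $H$ could contain two elements $(i, a)$ and $(i', a)$ with $i \neq i'$ sharing a second coordinate, and the definition of $\COL'$ only handles configurations whose second coordinates are distinct; the relevant product index would be ambiguous. Splitting $H'$ into disjoint infinite pieces removes this ambiguity at no cost to the order type of $H$, and is the only ingredient beyond Theorem~\ref{th:ramsey} needed to make the count $k^{\numpoints}$ rigorous.
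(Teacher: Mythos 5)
Your proposal is correct and matches the paper's argument essentially step for step: the same product coloring on $\binom{\om}{\numpoints}$ indexed by the $k^{\numpoints}$ ways to assign copies of $\om$ to the $\numpoints$ points, followed by Ramsey's theorem and a reassembly of $\om \cdot k$ from $k$ pairwise disjoint infinite pieces of the homogeneous set (the paper partitions by index residue mod $k$, which is just one explicit choice of your partition). Your closing remark about why the pieces must be disjoint is exactly the point the paper flags as well, so there is nothing to add.
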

\begin{proof}
Let $\numpoints,\numcolors, k  \in \N$ and let 
$$\COL \colon \binom{\om \cdot k}{\numpoints} \to [\numcolors]$$
be a $\numcolors$-coloring of $\binom{\om \cdot k}{\numpoints}$.
Let $\COL' \colon \binom{\om}{\numpoints} \to [\numcolors]^{(k^{\numpoints})}$ 
be defined by
\begin{align*}\COL'(x_1,x_2,\ldots,x_{\numpoints}) & = (\COL(x_1,\ldots,x_{\numpoints}), \COL(\om + x_1,x_2,\ldots,x_{\numpoints}), \ldots, \\&~~~~~\COL(\om \cdot (k-1) + x_1, \ldots, \om \cdot (k-1) + x_{\numpoints}))\end{align*}
with $x_1 < x_2 < \cdots < x_{\numpoints}.$ Our constructed coloring 
$\COL'$ maps $\numpoints$ elements of $\om$ to the $\COL$ of each of the $k^{\numpoints}$ ways to add one of $\om\cdot 0$ through $\om \cdot (k-1)$ with each of the $\numpoints$ coordinates. 
Formally, its output tuple contains $\COL$'s assignment of each element of $$
\{x_1,\om+x_1,\ldots,\om\cdot(k-1)+x_1\}\times\cdots\times
\{x_{\numpoints},\om+x_\numpoints,\ldots,\om\cdot(k-1)+x_\numpoints\}.
$$

Apply Theorem~\ref{th:ramsey} with $\COL'$ to find some $G\oequiv \om$ such that 
$$
\left |\COL'\left(\binom{G}{\numpoints}\right)\right|=1.
$$ 
Let the one color in $\COL'\left(\binom{G}{\numpoints}\right)$ be $Y$. 
Note that $Y$ is a tuple of length $k^{\numpoints}$. 

Enumerate $G$ in order as $\{g_0 < g_1 < \cdots\}$ and let \begin{align*}
H = & \{\om \cdot 0 + g_i \colon i \equiv 0 \mod k\}~\!\setconcat\\
 & \{\om \cdot 1 + g_i \colon i\equiv 1 \mod k \} \setconcat \cdots \setconcat  \\
& \{\om \cdot (k-1) + g_i \colon i \equiv k-1 \mod k\}.
\end{align*} For the case of $k=3$, we have \begin{align*}
    H = \{ & ~~~~~~~~~g_0, ~~~~~~~~~g_3, ~~~~~~~~~~g_6, \ldots,\\
    & ~~~~\!\om + g_1, ~~~~\!\om + g_4, ~~~~\om + g_7, \ldots,\\
    & \om \cdot 2 + g_2, \om \cdot 2 + g_5, \om \cdot 2 + g_8,\ldots\}.
\end{align*} Now $H \oequiv \om \cdot k$. Then
$$
\left |\COL\left(\binom{H}{\numpoints}\right)\right |\leq k^{\numpoints}:
$$ 
for any selection of $\numpoints$ elements from $H$, its color was considered in $\COL'$ so it must be one of the $k^{\numpoints}$ colors in $Y$.
\end{proof}

We now prove that $T(\numpoints, \omega \cdot k)$ is bounded below by $k^{\numpoints}$ by providing an example coloring. The coloring is inspired by Theorem~\ref{th:omka-lt}. This duality will be made more clear in later sections.

\begin{theorem}\label{th:omka-gt}
For $\numpoints,k \in \N$,~$T(\numpoints,\om \cdot k) \geq k^{\numpoints}$. Therefore, by Theorem~\ref{th:omka-lt}, $T(\numpoints,\om \cdot k) = k^{\numpoints}$.
\end{theorem}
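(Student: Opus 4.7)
My plan is to exhibit a single coloring with $k^{\numpoints}$ colors such that every $H\subseteq\om\cdot k$ with $H\oequiv\om\cdot k$ realizes every color. The coloring is dual to the one implicit in Theorem~\ref{th:omka-lt}: given $\{\commonconst_1<\cdots<\commonconst_{\numpoints}\}\in\binom{\om\cdot k}{\numpoints}$, I would write each $\commonconst_r=\om\cdot j_r+n_r$ with $j_r\in\{0,\ldots,k-1\}$ and $n_r\in\om$, let $r_1,\ldots,r_{\numpoints}$ be the permutation of $\{1,\ldots,\numpoints\}$ that sorts the $n_r$'s increasingly (breaking ties by $j_r$), and set $\COL(\{\commonconst_1,\ldots,\commonconst_{\numpoints}\})=(j_{r_1},\ldots,j_{r_{\numpoints}})\in\{0,\ldots,k-1\}^{\numpoints}$. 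This coloring uses at most $k^{\numpoints}$ colors, matching the target bound.

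The main step is a structural observation: for any $H\subseteq\om\cdot k$ with $H\oequiv\om\cdot k$, each intersection $H\cap[\om\cdot i,\om\cdot(i+1))$ is order-isomorphic to $\om$, hence infinite. My argument would be that an element of $\om\cdot k$ can be the supremum of an infinite strictly increasing sequence from $\om\cdot k$ only if it has no immediate predecessor, so the only candidates are $\om,\om\cdot 2,\ldots,\om\cdot(k-1)$. Because $H\oequiv\om\cdot k$ contains exactly $k-1$ such sup-from-below points under its induced ordering, these $k-1$ elements of $H$ must be exactly $\om,\om\cdot 2,\ldots,\om\cdot(k-1)$ inside $\om\cdot k$, pinning the block decomposition of $H$ to that of $\om\cdot k$. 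I expect this alignment argument to be the only delicate step of the proof.

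Once the alignment is in place, the rest is routine. Given any target color $(c_1,\ldots,c_{\numpoints})\in\{0,\ldots,k-1\}^{\numpoints}$, I would inductively pick naturals $n_1<n_2<\cdots<n_{\numpoints}$ with $\om\cdot c_r+n_r\in H$: at step $r$, the $c_r$-th block of $H$ is infinite, so some element inside it has second coordinate exceeding $n_{r-1}$. The resulting $\numpoints$-subset of $H$ has $\COL$-value exactly $(c_1,\ldots,c_{\numpoints})$, since its $n$-coordinates are distinct and appear in the same order as the $c_r$'s. Hence all $k^{\numpoints}$ colors are realized in $\COL(\binom{H}{\numpoints})$, and therefore $T(\numpoints,\om\cdot k)\geq k^{\numpoints}$.
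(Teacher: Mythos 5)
Your coloring is exactly the one the paper uses (sort the $\numpoints$ chosen points by their position within their copy of $\om$, break ties by block index, and record the resulting sequence of block indices), and your overall plan---show every copy of $\om\cdot k$ meets every block infinitely often, then greedily realize each of the $k^{\numpoints}$ colors---is precisely the verification the paper sketches and leaves to the reader. So the approach matches; the issue is with your justification of the one step you yourself flag as delicate.

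The claim that the $k-1$ ``sup-from-below'' points of $H$ must literally be the ordinals $\om,\om\cdot 2,\ldots,\om\cdot(k-1)$ is false. Take $k=2$ and $H=\{0,1,2,\ldots\}\cup\{\om+5,\om+6,\ldots\}$. Then $H\oequiv\om\cdot 2$, and the element of $H$ playing the role of the limit point is $\om+5$, not $\om$: it is a limit from below \emph{in the induced order on $H$} (no immediate predecessor in $H$, infinitely many $H$-predecessors) even though it has an immediate predecessor in the ambient $\om\cdot k$ and is not the supremum of any increasing sequence there. You are conflating ``limit point of $H$ in its own order'' with ``limit ordinal of $\om\cdot k$.'' Fortunately the conclusion you actually need---that each $H_i=H\cap[\om\cdot i,\om\cdot(i+1))$ has order type $\om$---is true and has a shorter proof: each $H_i$ is a subset of a set of type $\om$, hence has type some $t_i\le\om$, and $H=H_0+H_1+\cdots+H_{k-1}$ as an ordered sum, so $t_0+t_1+\cdots+t_{k-1}=\om\cdot k$; if $j<k$ of the $t_i$ equal $\om$, the ordinal sum is $\om\cdot j$ plus a finite tail, which is strictly less than $\om\cdot k$, so all $k$ of the $t_i$ must equal $\om$. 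With that repair, your greedy selection of $n_1<\cdots<n_{\numpoints}$ goes through and the lower bound follows.
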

\begin{proof}
We give a $k^{\numpoints}$-coloring of $\binom{\om \cdot k}{\numpoints}$ that has no ($k^{\numpoints}-1$)-homogeneous $H\oequiv \om\cdot k$.
We represent $\om \cdot k$ as
$$\om \cdot k \oequiv \Commonconst_1 \setconcat \cdots \setconcat \Commonconst_k$$
where each $\Commonconst_i \oequiv \om$.
If an element $x \in \om \cdot k$ is the $k$th element of $\Commonconst_i$, we represent it as the ordered pair $(i,k)$.

Before defining the coloring in general, we give an example with $\numpoints=5$ and $k=200$. 
%
We define the color of the element
$$e=\{ (3,12), (50,2), (110,12), (110,7777), (117,3) \}$$
as follows:

\begin{itemize}
\item
Order the ordered pairs by their second coordinates.
\item
If some elements have the same second coordinate, break the tie with their first coordinates. At this point in our example we have
$$((50,2), (117,3) (3,12), (110,12), (110,7777)).$$
\item
Remove the second coordinates to get
$$(50,117,3,110,110).$$
This sequence is the element's color.
\end{itemize}


In general, for any $e=\{(i_1,x_1),\ldots,(i_{\numpoints},x_{\numpoints})\}$, order the ordered pairs by their second coordinates, and in case of ties, use their first coordinates to determine the ordering. Then
$\COL(e)$ is the sequence of first coordinates after ordering.

Note that the number of colors is the number of size $\numpoints$ tuples where each 
number is in $\{1,\ldots,k\}$. Hence there are $k^{\numpoints}$ colors.
We leave it to the reader to show that there can be no $(\om \cdot k)$-$(k^{\numpoints}-1)$-homogeneous $H$. The key idea of the proof, much like the previous lower bounds in this paper, is to use the property of order-equivalence to find elements of an arbitrary $H$ that produce each of the $k^{\numpoints}$ colors.

By this proof, $T(\numpoints,\om \cdot k) \geq k^{\numpoints}$, and so by Theorem~\ref{th:omka-lt}, $T(\numpoints,\om \cdot k) = k^{\numpoints}$.

\end{proof}

\section{The Big Ramsey Degree \texorpdfstring{$T(2,\om^2)$}{T(2, omega 2)}}\label{se:omega2} 

This section provides a concrete example involving ordinals greater than $\om$, in particular $\om^2$, the ordered set of all ordinals $\om\cdot a +b$ with $a,b \in \om$. We first define some notation.
\begin{definition} \label{def:coef}
If $e \in \binom{\om^d}{n},$ we refer to $e$ as an {\it edge.} Then $e$ has $n$ {\it elements} selected from $\om^d.$
Hence each element of $e$ is some ordinal number less than $\om^d.$ That element can be represented as a polynomial of degree $< d$ in $\omega.$ We refer to the natural-valued coefficients of these polynomials as the {\it coefficients} of $e.$
\end{definition}

Proving $T(1, \om^2) = 1$ is a straightforward exercise. We begin with $T(2, \om^2)$ before proceeding to the general case.

\begin{theorem}\label{th:om2} 
The big Ramsey degree $T(2,\om^2)=4$.
\end{theorem}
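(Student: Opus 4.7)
The plan is to prove $T(2,\om^2) \le 4$ and $T(2,\om^2) \ge 4$ separately, mirroring the up--down structure of Theorems~\ref{th:omka-lt}--\ref{th:omka-gt}.

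For the upper bound, I would view each element of $\om^2$ as a coefficient pair $(a,b) \in \om \times \om$ under lex order. Given $\COL\colon \binom{\om^2}{2} \to [\numcolors]$, define auxiliary colorings on $\binom{\om}{4}$ (together with colorings on $\binom{\om}{3}$ and $\binom{\om}{2}$ to handle the degenerate cases where coordinates of the two elements coincide) that record, for each input tuple of naturals, the $\COL$-values of every essentially different pair of $\om^2$-elements obtainable by assigning the naturals to column and position slots. Iteratively applying Theorem~\ref{th:ramsey} to these auxiliary colorings yields an infinite $X \subseteq \om$ homogeneous for all of them. Using $X$ I would then build $H \subseteq \om^2$ by a structured selection, for example by splitting $X$ into two disjoint infinite sub-sequences feeding the column slot and the position slot respectively, so that $H \oequiv \om^2$ and pairs in $\binom{H}{2}$ collapse to at most $4$ canonical classes on which $\COL$ is forced to be constant.

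For the lower bound, I would exhibit an explicit $4$-coloring in the spirit of Theorem~\ref{th:omka-gt}: sort the four coefficients of a pair in $\om^2$ and read off a short trace encoding the pair's structural type (same-column versus cross-column, and for cross-column pairs the interleaving pattern of the position coordinates with the column coordinates). The argument then shows that any $H \oequiv \om^2$ realizes all four colors. The key tool is a cofinality argument: each ``$H$-column'' $H^n$ of $H$, an $\om$-type subset of $\om^2$, is eventually contained in a single ambient column of $\om^2$, so $H$ is forced to contain same-column pairs and sufficiently many cross-column pairs of each sub-type.

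The main obstacle is isolating the right $4$-type classification so that both bounds match exactly. In particular, the cross-column sub-types chosen for the lower-bound coloring must be chosen robustly: a naïve split such as $b_1 < b_2$ versus $b_1 = b_2$ versus $b_1 > b_2$ fails, because one can build $H \oequiv \om^2$ whose $H$-columns have pairwise disjoint sets of $b$-coordinates, eliminating the $b_1 = b_2$ type altogether. The correct classification should instead encode features that survive every such construction of $H$, which is what the trace-based coloring of Theorem~\ref{th:omka-gt} captures for finite multiples of $\om$ and which should be carefully adapted to $\om^2$ here.
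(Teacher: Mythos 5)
Your proposal matches the paper's proof in essentially all respects: your four canonical classes are exactly the paper's four functions $f_1,\dots,f_4$ (three cross-column interleavings plus the same-column case), the upper bound proceeds by applying Theorem~\ref{th:ramsey} to auxiliary colorings of tuples from $\om$ followed by a structured choice of $H$, and the lower bound uses the same interleaving-type coloring together with the same column-stabilization (cofinality) argument. The one refinement needed is in your construction of $H$: splitting $X$ into just two sub-sequences (column slots vs.\ position slots) is not enough, since the position coefficients must further be partitioned into pairwise disjoint infinite sets, one per column, each lying above its own column's coefficient --- exactly as the paper arranges --- or else cross-column pairs with equal position coordinates survive as a fifth class and the upper bound of $4$ fails.
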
 
\begin{proof}
We first prove $T\left(2, \om^2\right) \leq 4$.

Let $\numcolors \in \N$ and let \begin{align*}
    \COL \colon \binom{\om^2}{2} \to [\numcolors]
\end{align*}
be a $\numcolors$-coloring of $\binom{\om^2}{2}$. 
We define four functions $f_0,f_1,f_2,f_3$ from 
domain $\binom{\om}{4}$ to codomain $\binom{\om^2}{2}$ and then use them to define a coloring 
from 
$\binom{\om}{4}$ to $[\numcolors]^4$.
In what follows, 
let $x_1<x_2<x_3<x_4$. For $i \in [4],$ we define $f_i \colon \binom{\om}{4} \to \binom{\om^2}{2}$ where \begin{align*}
f_0(x_1,x_2,x_3,x_4)  &= \{\om \cdot x_1 + x_2, \om \cdot x_3 + x_4\}\\
f_1(x_1,x_2,x_3,x_4)  &= \{\om \cdot x_1 + x_3, \om \cdot x_2 + x_4\}\\
f_2(x_1,x_2,x_3,x_4)  &= \{\om \cdot x_1 + x_4, \om \cdot x_2 + x_3\}\\
f_3(x_1,x_2,x_3,x_4)  &= \{\om \cdot x_1 + x_2, \om \cdot x_1 + x_3\}.
\end{align*}

Then let
$\COL' \colon \binom{\om}{4} \to [\numcolors]^4$ where
$$\COL'(X) =(\COL(f_0(X)),
\COL(f_1(X)),
\COL(f_2(X)),
\COL(f_3(X))
).
$$

Apply Theorem~\ref{th:ramsey} on $\COL'$ to find some $G \oequiv \omega$ where 
$\left|\COL' \left( \binom{G}{4} \right)\right|=1$.
Let $G = \{\commonconst_0 <\nobreak \commonconst_1 <\nobreak \cdots\}.$ Let
\begin{align*}
H = & ~~~~\!~\{\om \cdot \commonconst_{1} + \commonconst_2, \om \cdot \commonconst_{1} + \commonconst_6,~\om \cdot \commonconst_{1} + \commonconst_{10}, \ldots\} \\
 & \setconcat ~\{\om \cdot \commonconst_{3} + \commonconst_{4}, \om \cdot \commonconst_{3} + \commonconst_{12}, \om \cdot \commonconst_{3} + \commonconst_{20}, \ldots\}\\
 & \setconcat~\{\om \cdot \commonconst_{5} + \commonconst_{8}, \om \cdot \commonconst_{5} + \commonconst_{24}, \om \cdot \commonconst_{5} + \commonconst_{40}, \ldots\}\\
& ~~\!\vdots
\end{align*}
Formally, $$
H = \Commonconst_1 \setconcat \Commonconst_2 \setconcat \cdots
$$ where $$
\Commonconst_i = \{\om\cdot \commonconst_{2i-1} + \commonconst_j \colon j = 2^i + k 2^{i+1}, k \in \N\}.
$$
Note that each of the $A_i$'s coefficients
(as defined in Definition~\ref{def:coef}) 
are disjoint.

Then $H \oequiv \om^2$, as it is the concatenation of countably infinite sets order-equivalent to $\om$. Note that for every $\om \cdot \commonconst_i + \commonconst_j \in H$ we have $\commonconst_i < \commonconst_j$.

Let the single 4-tuple that $\COL'$ outputs on $\binom{G}{4}$ be $Y.$ Consider any edge $\{\om \cdot \rareconst_1 + \rareconst_2, \om \cdot \rareconst_3 + \rareconst_4\} \in \binom{H}{2}$ with $\om \cdot \rareconst_1 + \rareconst_2 < \om \cdot \rareconst_3 + \rareconst_4$. \begin{itemize}
\item If $\rareconst_1 \neq \rareconst_3$, then the two elements are from different $A_i$ sets. Then $\rareconst_1 < \rareconst_3$ by the ordering of the two elements and $\rareconst_2 \neq \rareconst_4$ by the construction of $H$. 
We also have $\rareconst_1 < \rareconst_2$, $\rareconst_1 < \rareconst_4$, and $\rareconst_3 < \rareconst_4$ by the construction of $H$.
There are three subcases: \begin{itemize}
    \item $\rareconst_1<\rareconst_2<\rareconst_3<\rareconst_4$: then $\COL(f_0(\rareconst_1,\rareconst_2,\rareconst_3,\rareconst_4)) \in Y.$
    \item $\rareconst_1<\rareconst_3<\rareconst_2<\rareconst_4$: then $\COL(f_1(\rareconst_1,\rareconst_3,\rareconst_2,\rareconst_4)) \in Y.$
    \item $\rareconst_1<\rareconst_3<\rareconst_4<\rareconst_2$: then $\COL(f_2(\rareconst_1,\rareconst_3,\rareconst_4,\rareconst_2)) \in Y.$
\end{itemize}
In all subcases, $\COL(\{\om \cdot \rareconst_1 + \rareconst_2, \om \cdot \rareconst_3 + \rareconst_4\}) \in Y.$
\item If $\rareconst_1 = \rareconst_3$, then the two elements are from the same $A_i.$ Then $\rareconst_2 < \rareconst_4$ by the ordering of the elements and so $\rareconst_1=\rareconst_3<\rareconst_2<\rareconst_4$ by the construction of $H$. Because $\COL(f_3(\rareconst_1,\rareconst_2,\rareconst_4,\rareconst_4+1)) \in Y$ (note that the output of $f_3$ does not depend on its final argument), $\COL(\{\om \cdot \rareconst_1 + \rareconst_2, \om \cdot \rareconst_3 + \rareconst_4\}) \in Y$.
\end{itemize}
In all cases, $\COL(\{\om \cdot \rareconst_1 + \rareconst_2, \om \cdot \rareconst_3 + \rareconst_4\}) \in Y$ so \begin{align*}
\COL\left(\binom{H}{2}\right) \subseteq Y
\end{align*} with $|Y|=4$. Because $H \oequiv \om^2$ and $\COL$ was arbitrary, $T\left(2, \om^2\right) \leq 4$.

We now prove $T\left(2, \om^2\right) \geq  4$.
Let $\COL \colon \binom{\om^2}{2} \to [4]$ with \begin{align*}
\COL(\om \cdot \commonconst_1 + \commonconst_2, \om \cdot \commonconst_3 + \commonconst_4) = 
\begin{cases}
0 & \commonconst_1 < \commonconst_2 < \commonconst_3 < \commonconst_4\\
1 & \commonconst_1 < \commonconst_3 < \commonconst_2 < \commonconst_4\\
2 & \commonconst_1 < \commonconst_3 < \commonconst_4 < \commonconst_2\\
3 & \textnormal{otherwise}
\end{cases}
\end{align*}
where $\om \cdot \commonconst_1 + \commonconst_2 < \om \cdot \commonconst_3 + \commonconst_4$. 
Let $H \subseteq \omega^2$ be any set such that $H\oequiv \omega^2$. 
We show that $\left|\COL\left(\binom{H}{2}\right)\right)|=4$. Every element of $H$ is of the form $\omega \cdot \rareconst + e,$ for finite $\rareconst,e.$ Because $H \oequiv \om^2$, we have $H = H_1+H_2+ \cdots$ where each $H_i \oequiv \omega$. For each $H_i,$ there exists some $\rareconst_i$ such that $$
H'_i = \{\omega \cdot \rareconst_i + e \in H_i\} \oequiv \om.
$$ Let $e_{i,j}$ be such that every element of $H'_i$ is of the form $\omega \cdot \rareconst_i + e_{i,j},$
where
$$\rareconst_1 < \rareconst_2 < \cdots$$
and for all $i$, 
$$e_{i,1} < e_{i,2} < \cdots.$$
We now show each color is output by $\COL$ on $H$.
\begin{itemize}
    \item Color 0: Starting with any $\rareconst_i$, find some $j$ where $e_{i, j} > \rareconst_i$. This is guaranteed, as $\rareconst_i$ is finite and the $e_{i, j}$ are infinitely increasing. Then find some $k$ where $\rareconst_k > e_{i, j}$; again guaranteed because $e_{i, j}$ is finite and the $\rareconst_i$ are infinitely increasing. Finally, find an
    $\ell$ where $e_{k ,\ell} > \rareconst_k$ guaranteed by similar means. Then 
    $$\COL(\omega\cdot \rareconst_i + e_{i, j}, \omega\cdot \rareconst_k + e_{k, \ell})=0.$$
    \item Colors 1 and 2 are guaranteed to exist by arguments similar to the argument for color 0.
    \item Color 3: The case of $\commonconst_1=\commonconst_3<\commonconst_2<\commonconst_4$ falls into the category of {\it otherwise}. With this in mind, we can start with some $\rareconst_i$, and then find a $e_{i, j} > \rareconst_i$. Then, we only need to find a $e_{i, \ell} > e_{i, j}$; this is guaranteed because $e_{i, j}$ is finite. Then $$\COL(\omega\cdot \rareconst_i + e_{i, j}, \omega\cdot \rareconst_i + e_{i, \ell})=3.$$
\end{itemize}

 
\end{proof}

\section{\capcrules}\label{se:strong} 

We introduce a concept called {\it \crulesLong} (hereafter \crulesnospace) to prove 
general results about
\linebreak
$T(\numpoints,\omega^d \cdot\nobreak k)$.
The concept behind \crules is built on the
ideas of Blass et al.~\cite{BDR21}.
We motivate the concept by examining a previous proof.

The proof that $T(2, \om^2) = 4$ (Theorem~\ref{th:om2})
used four functions $f_0,f_1,f_2$, and $f_3$. 
These functions were chosen to cover $H$ in a way where the color of every edge in $H$ 
was output by one of $f_0,f_1,f_2,$ or $f_3$.
Note that our proof did {\it not} use
$f\colon \binom{\om}{4} \to \binom{\om^2}{2}$ with
$$f(x_1,x_2,x_3,x_4) = \{\om \cdot x_1 + x_3, \om \cdot x_2 + x_3\},$$
for $x_1 < x_2 < x_3 < x_4.$

We did not use $f$ in the proof of $T(2, \om^2) = 4$
because $f$ does not cover any edges in $H$: we constructed $H$ in a 
way where distinct copies of $\om$ had distinct finite coefficients. When $x_1 \neq x_2$, no matter the values of $x_1, x_2,$ and 
$x_3$, the elements 
$\om \cdot x_1 + x_3$ and $\om \cdot x_2 + x_3$ cannot both be from $H$. Our goal in lower bound
proofs is to use as few of these $f_i$ functions as possible to cover some carefully constructed
$H.$

We define a notion of colorings that each of $f_0,f_1,f_2,$ and $f_3$ satisfy for $\binom{\om^2}{2}.$
We also show how to count these colorings, and how these colorings are linked to big Ramsey degrees.

\begin{definition}\label{def:coloring-rule}
We now define \crules (\crulesLong) rigorously. We impose a structure on edges and list criteria that \crules must satisfy.
For $\numpoints,d,k \in \N$, an edge $e = \{p_1, \dots, p_{\numpoints}\}$ in $\binom{\om^d \cdot k}{\numpoints}$ consists of $\numpoints$ elements of $\om^d\cdot k$. Each element $p_{\inumpoints} $ is equal to 
$$\om^d \cdot \rareconst_{\inumpoints} + \om^{d-1} \cdot \commonconst_{\inumpoints,d-1} + \om^{d-2} \cdot \commonconst_{\inumpoints,d-2} + \cdots + \om^1 \cdot \commonconst_{\inumpoints,1} + \commonconst_{\inumpoints,0},$$
where $\commonconst_{\inumpoints,\idegr} \geq 0$ and $0\leq \rareconst_{\inumpoints} < k$.

Thus, any edge $e$ is defined by the $\numpoints$ values of the $\rareconst_{\inumpoints}$ and the $\numpoints\cdot d$ values of the $\commonconst_{\inumpoints,\idegr}$. 

A \textit{\crule (\cruleLong)} on $\binom{\om^d \cdot k}{\numpoints}$ is a pair $(\assny,\xleq)$ of constraints on these values $\rareconst_{\inumpoints}$ and $\commonconst_{\inumpoints,\idegr}$ satisfying certain properties we enumerate below. Firstly, $\assny$ is an assignment of the values for the $\rareconst_{\inumpoints}$; formally, it is a map $\assny: [\numpoints] \to \{0,\dots,k-1\}$ from indices of the $\rareconst_{\inumpoints}$ to the values of the $\rareconst_{\inumpoints}$. Having $\assny(\inumpoints) = v$ means we constrain $\rareconst_{\inumpoints} = v$.
Meanwhile, $\xleq$ is a total preorder (i.e. every two of elements are comparable, and $\xleq$ is reflexive and transitive) on the indices of the $\commonconst_{\inumpoints,\idegr}$. Having $(\inumpoints,\idegr) \xleq (\inumpoints',\idegr')$ means we constrain $\commonconst_{\inumpoints,\idegr} \leq \commonconst_{\inumpoints',\idegr'}$.

We often denote $\assny$ using an ordered list of clauses: $$\rareconst_1 = \assny(1), \rareconst_2 = \assny(2), \dots, \rareconst_{\numpoints} = \assny(\numpoints).$$

We often denote $\xleq$ by a permutation of the expressions $\commonconst_{1,0}, \dots, \commonconst_{\numpoints,d-1}$ interspersed by either $<$ or $=$. We write $\commonconst_{\inumpoints,\idegr} < \commonconst_{\inumpoints',\idegr'}$ to mean that both $(\inumpoints,\idegr) 
\xleq (\inumpoints',\idegr')$ and $(\inumpoints',\idegr') 
\not \xleq (\inumpoints,\idegr)$ and we write $\commonconst_{\inumpoints,\idegr} = \commonconst_{\inumpoints',\idegr'}$ to mean both $(\inumpoints,\idegr) 
\xleq (\inumpoints',\idegr')$ and $(\inumpoints',\idegr') 
\xleq (\inumpoints,\idegr)$. Note that $x = y < z$ and $y = x < z$ are two representations of the same preorder, so this notation is not unique. One example of such a representation is $$
\commonconst_{1, 1} = \commonconst_{2, 1} < \commonconst_{1 ,0} < \commonconst_{2 ,0}.
$$

We refer to the expressions $\commonconst_{\inumpoints,\idegr}$ and $\rareconst_\inumpoints$ when used in our notation for $\assny$ and $\xleq$ as variables, and will describe different preorders $\xleq$ by discussing permutations of some sequence of $\commonconst_{\inumpoints,\idegr}$.

To be a \crule the following must hold:

\begin{enumerate}
    \item \label{cr:index-ordering} If $d \geq 1$, then for all $\inumpoints < \inumpoints',$ $\commonconst_{\inumpoints, 0} < \commonconst_{\inumpoints' ,0}.$ Otherwise when $d = 0$, $\rareconst_{\inumpoints} < \rareconst_{\inumpoints'}$ for all $\inumpoints<\inumpoints'$. (The element indices are ordered by their lowest-exponent variable.) 
    \item \label{cr:y-distinct} For all $\inumpoints,\inumpoints'$, $(\exists \idegr\; \commonconst_{\inumpoints,\idegr} = \commonconst_{\inumpoints',\idegr})\implies \rareconst_{\inumpoints} = \rareconst_{\inumpoints'}$. (Two elements with any $\commonconst$ values that are the same must have the same $\rareconst$ value.)
    \item \label{cr:high-first} For all $\idegr > \idegr',$ $\commonconst_{\inumpoints, \idegr} < \commonconst_{\inumpoints, \idegr'}$. (The high-exponent variables of each element are strictly less than the low-exponent variables.)
    \item \label{cr:term-dim-diff} $\commonconst_{\inumpoints, \idegr} = \commonconst_{\inumpoints', \idegr'} \implies \idegr = \idegr'$. (Only variables with the same exponent can be equal.)
    \item \label{cr:high-split} $\commonconst_{\inumpoints, \idegr} \neq \commonconst_{\inumpoints', \idegr} \implies \commonconst_{\inumpoints,\idegr-1} \neq \commonconst_{\inumpoints',\idegr-1}$ for all $\idegr>0$. (Two elements that differ in a high-exponent variable differ in all lower-exponent variables.)
\end{enumerate}

An example of a \crule for $\binom{\om^2}{2}$ is $$\rareconst_1=0,~\rareconst_2=0,~\commonconst_{1, 1} = \commonconst_{2, 1} < \commonconst_{1, 0} < \commonconst_{2, 0}.$$
Note that because $k=1$ in the example, we must have $\rareconst_{\inumpoints} = 0$ for every $\inumpoints$.


\end{definition}

\begin{definition}\label{def:crule-equivalence}
    Two \crules are \textit{equivalent} whenever their constraints $\assny, \xleq$ are
    exactly equal.
\end{definition}

\begin{definition} 
The \textit{size} of a \crule is how many equivalence classes its $\commonconst_{\inumpoints,\idegr}$ form under $\xleq$: 
for example, $\commonconst_{1, 1} = \commonconst_{2, 1} < \commonconst_{1, 0} < \commonconst_{2, 0}$ would have size $p=3$ regardless of $\rareconst_{\inumpoints}$. Therefore a \crulenospace's size $p$ can be no larger than $\numpoints \cdot d$.
\end{definition}
\begin{definition}~
\begin{enumerate}
\item 
$P_p \left(\numpoints,\om^d \cdot k \right)$ is the number of \crules of size $p$ there are for $\binom{\omega^d \cdot k}{\numpoints}$. 
\item 
$P\left(\numpoints,\om^d \cdot k\right)$ is the total number of \crules there are for $\binom{\omega^d \cdot k}{\numpoints}$ of any size. It can be calculated as $$
\sum_{p=0}^{\numpoints \cdot d} P_p(\numpoints, \om^d \cdot k).
$$
\end{enumerate}
\end{definition}
Eventually,
we will show $T(\numpoints,\om^d \cdot k) = P(\numpoints,\om^d \cdot k)$.

\begin{definition}\label{def:crule-coef}
An edge $$\{\om^d \cdot \rareconst_{\inumpoints} + \om^{d-1} \cdot \commonconst_{\inumpoints,d-1} + \om^{d-2} \cdot \commonconst_{\inumpoints,d-2} + \cdots + \om^1 \cdot \commonconst_{\inumpoints,1} + \commonconst_{\inumpoints,0} \colon 1 \leq \inumpoints \leq \numpoints\}$$
\textit{satisfies} some \crule if $\assny(\inumpoints) = \rareconst_{\inumpoints}$ for every $1 \leq \inumpoints \leq \numpoints$ and if 
$(\inumpoints,\idegr) \xleq (\inumpoints',\idegr') \iff \commonconst_{\inumpoints,\idegr} \leq \commonconst_{\inumpoints',\idegr'}$ for all $1 \leq \inumpoints,\inumpoints' \leq \numpoints$ and $0 \leq \idegr, \idegr' < d.$
Note that while every edge describes some map $\assny$ and order $\xleq,$ if those constructs
do not meet the requirements in Definition~\ref{def:coloring-rule},
that edge does not satisfy any \crulenospace. We extend and formalize the idea of
coefficients introduced in Definition~\ref{def:coef} to be the set of
$a_{i,j}$ terms.
\end{definition}

\section{Big Ramsey Degrees of \texorpdfstring{$\om^d$}{omega d}}\label{se:ord} 
This section is devoted to the case where $k=1$ in $\binom{\om^d \cdot k}{\numpoints}$. When $k=1$, each $\rareconst_{\inumpoints}$ in a \crule is forced to be 0. Then all $\rareconst_{\inumpoints}$ values are the same, so criterion~\ref{cr:y-distinct} of Definition~\ref{def:coloring-rule} is always satisfied. In this section, our proofs focus only on how $\xleq$ permutes the $\commonconst_{\inumpoints,\idegr}$ variables. When values for $\rareconst_{\inumpoints}$ are not specified, they are assumed to be all 0 by default.

We show equality between big Ramsey degrees and counts of \crules in this
section. We start with a recurrence that counts \crulesnospace.

\begin{lemma}
\label{le:P-recur}
For $\numpoints,d \in \N$,


\[
P_p\left( \numpoints,\om^d\right) = \begin{cases}
    0 & \textnormal{(1)\phantom{0} } d = 0 \land \numpoints \geq 2\\
    1 & \textnormal{(2)\phantom{0} } \numpoints = 0 \land p = 0\\
    0 & \textnormal{(3)\phantom{0} } \numpoints = 0 \land p \geq 1\\
    1 & \textnormal{(4)\phantom{0} } d = 0 \land \numpoints = 1 \land p = 0\\
    0 & \textnormal{(5)\phantom{0} } d = 0 \land \numpoints = 1 \land p \geq 1\\
    1 & \textnormal{(6)\phantom{0} } d = 1 \land \numpoints \geq 1 \land \numpoints = p\\
    0 & \textnormal{(7)\phantom{0} } d = 1 \land \numpoints \geq 1 \land \numpoints \neq p\\
    0 & \textnormal{(8)\phantom{0} } d \geq 2 \land \numpoints \geq 1 \land p = 0\\
    \sum\limits_{j=1}^\numpoints \sum\limits_{i=0}^{p-1} \binom{p-1}{i} P_i \left( j,\om^{d-1} \right) P_{p-1-i} \left( \numpoints-j,\om^d\right) & \textnormal{(9)\phantom{0} } d \geq 2 \land \numpoints \geq 1 \land p \geq 1
\end{cases}
\]

\end{lemma}

\begin{proof}

\case{1} First, suppose $\numpoints \geq 2$ and $d = 0$. As argued at the beginning of this section, $\rareconst_{\inumpoints}= 0$ for all $\inumpoints$. But by criterion~\ref{cr:index-ordering} of Definition~\ref{def:coloring-rule}, since $d = 0$ we need $\rareconst_1 < \rareconst_2$, so no \crules are possible regardless of size $p$. This proves the first case of the result.

\case{2} Suppose $\numpoints = 0$. 
Since there are no $\rareconst_{\inumpoints}$ or $\commonconst_{\inumpoints,\idegr}$, the criteria are vacuously satisfied. There is only one \crule and it has size $p = 0$. This proves the second and third cases of the result.

\case{3} When both $d = 0$ and $\numpoints \leq 1$, 
criterion~\ref{cr:index-ordering} of Definition~\ref{def:coloring-rule} is
vacuously satisfied with either no $\rareconst_{\inumpoints}$ or $\rareconst_1 = 0$. Again, because $\numpoints \cdot d = 0$, there are no $\commonconst_{\inumpoints,\idegr}$. Thus there is only one \crule of size $p=0$, which proves the fourth and fifth cases of the result.

\case{4} Now suppose $\numpoints \geq 1$ and $d = 1$. To ensure criterion~\ref{cr:index-ordering} of Definition~\ref{def:coloring-rule}, each of the $\numpoints$ variables $\commonconst_{\inumpoints,0}$ can only form one \crule $\commonconst_{1,0} < \commonconst_{2,0} < \cdots < \commonconst_{\numpoints,0}$ of size $\numpoints$ so $P_{\numpoints}\left(\numpoints, \om^d\right) = 1$ and $P_p\left(\numpoints, \om^d\right) = 0$ for $p \neq \numpoints$. This proves the sixth and seventh cases of the result.

\case{5} Finally, consider $\numpoints \geq 1, d \geq 2$. By the definition of a \crulenospace, because $\numpoints \cdot d > 0,$ there are no \crules of size $p=0.$ This proves the eighth
case of the result.
We prove the final case by showing that the process described below creates all possible \crules of an expression.

Let $\numpoints \geq 1$, $d \geq 2$, and $p \geq 1$ be naturals. Let $1 \leq m \leq \numpoints$ and $0 \leq s \leq p-1$ be naturals. As we proceed through the process, we work with an example of $\numpoints = 4$, $d = 5$, $p=13$, $m=2$, and $s=5$.

We create $$\binom{p-1}{s} P_s \left( m,\om^{d-1} \right) P_{p-1-s} \left( \numpoints-m,\om^d \right)$$ \crules of size $p$ by combining $P_s \left( m,\om^{d-1} \right)$ \crules of size $s$ and $P_{p-1-s} \left( \numpoints-m,\om^d \right)$ \crules of size $p-1-s$, with $\binom{p-1}{s}$ \crules derived from each pair.

Let $\tau_1$ represent one of the $P_s\left(m, \om^{d-1}\right)$ \crules of $\binom{\om^{d-1}}{m}$ of size $s$, and $\tau_2$ represent one of the $P_{p-1-s}\left(\numpoints-m, \om^d\right)$ \crules of $\binom{\om^{d}}{\numpoints-m}$ of size $p-1-s$. In our example, let \begin{align*}
\tau_1 & \colon \commonconst_{1,3} = \commonconst_{2,3} < \commonconst_{1,2} = \commonconst_{2,2} < \commonconst_{1,1} = \commonconst_{2,1} < \commonconst_{1,0} < \commonconst_{2,0}\\
\tau_2 & \colon \commonconst_{1,4} = \commonconst_{2,4} < \commonconst_{1,3} = \commonconst_{2,3} < \commonconst_{1,2} = \commonconst_{2,2} < \commonconst_{2,1} < \commonconst_{1,1} < \commonconst_{1,0} < \commonconst_{2,0}.
\end{align*}

Then we can combine each $\tau_1$ and $\tau_2$ to form $\binom{p-1}{s}$ unique new \crules of size $p$.
In summary, we will interleave the rules together, and add a final equivalence class at the start of
the \crulenospace.
Change the index of each variable $\commonconst_{\inumpoints,\idegr}$ in $\tau_2$ to $\commonconst_{\inumpoints+m,\idegr}$, and intertwine the equivalence classes of the \crules together, preserving each \crulenospace's original ordering of its own equivalence classes: there are $\binom{p-1}{s}$ ways to do this. In our example, after changing the indices of $\tau_2$ we have \begin{align*}
\tau_1 & \colon \commonconst_{1,3} = \commonconst_{2,3} < \commonconst_{1,2} = \commonconst_{2,2} < \commonconst_{1,1} = \commonconst_{2,1} < \commonconst_{1,0} < \commonconst_{2,0}\\
\tau_2 & \colon \commonconst_{3,4} = \commonconst_{4,4} < \commonconst_{3,3} = \commonconst_{4,3} < \commonconst_{3,2} = \commonconst_{4,2} < \commonconst_{4,1} < \commonconst_{3,1} < \commonconst_{3,0} < \commonconst_{4,0}
\end{align*} and one of the $\binom{12}{5}$ permutations is \begin{align*}
\commonconst_{3,4} = \commonconst_{4,4} < \commonconst_{1,3} = \commonconst_{2,3} < \commonconst_{1,2} = \commonconst_{2,2} < \commonconst_{3,3} = \commonconst_{4,3} < \commonconst_{3,2} = \commonconst_{4,2} \\ < \commonconst_{1,1} = \commonconst_{2,1} < \commonconst_{4,1} < \commonconst_{1,0} < \commonconst_{3,1} < \commonconst_{3,0} < \commonconst_{2,0} < \commonconst_{4,0}.
\end{align*}This new \crule likely breaks criterion~\ref{cr:index-ordering} of Definition~\ref{def:coloring-rule}; for each $1 \leq \inumpoints \leq \numpoints$, change the $i$ indices of each $\commonconst_{\inumpoints,\idegr}$ according to where their $\commonconst_{\inumpoints,0}$ is in the ordering of all $\commonconst_{i,0}$. In our example, we have $\commonconst_{1,0} < \commonconst_{3,0} < \commonconst_{2,0} < \commonconst_{4,0}$; after swapping indices $2$ and $3$ to enforce criterion~\ref{cr:index-ordering}, we have \begin{align*}
\commonconst_{2,4} = \commonconst_{4,4} < \commonconst_{1,3} = \commonconst_{3,3} < \commonconst_{1,2} = \commonconst_{3,2} < \commonconst_{2,3} = \commonconst_{4,3} < \commonconst_{2,2} = \commonconst_{4,2} \\ < \commonconst_{1,1} = \commonconst_{3,1} < \commonconst_{4,1} < \commonconst_{1,0} < \commonconst_{2,1} < \commonconst_{2,0} < \commonconst_{3,0} < \commonconst_{4,0}.
\end{align*}

There are now $d \cdot (\numpoints - m) + (d - 1) \cdot m = \numpoints \cdot d - m$ variables in the \crulenospace. There are $m$ variables of the form $\commonconst_{\inumpoints_s,d-1}$ for $1 \leq s \leq m$ that are not in the \crule yet; insert one equivalence class $\commonconst_{\inumpoints_1,d-1} = \commonconst_{\inumpoints_2,d-1} = \cdots = \commonconst_{\inumpoints_m,d-1}$ at the front of the new \crulenospace, bringing its size to $p$. We insert $\commonconst_{1,4}=\commonconst_{3,4}$ in our example to get \begin{align*}
\commonconst_{1,4}=\commonconst_{3,4} < \commonconst_{2,4} = \commonconst_{4,4} < \commonconst_{1,3} = \commonconst_{3,3} < \commonconst_{1,2} = \commonconst_{3,2} < \commonconst_{2,3} = \commonconst_{4,3} < \commonconst_{2,2} = \commonconst_{4,2} \\ < \commonconst_{1,1} = \commonconst_{3,1} < \commonconst_{4,1} < \commonconst_{1,0} < \commonconst_{2,1} < \commonconst_{2,0} < \commonconst_{3,0} < \commonconst_{4,0}.
\end{align*} 
Each \crule is unique by the $\tau_1$ and $\tau_2$ used to create it because the process is invertible: we can remove the leading equivalence class and separate the remaining variables into $\tau_1$ and $\tau_2$ by whether their indices were in the leading equivalence class and changing the indices. Here, $\tau_1$ corresponds to indices 1 and 3 (not including the leading equivalence class) and is \textbf{bolded}, and $\tau_2$ corresponds to indices 2 and 4 and is \underline{underlined}.
\begin{align*}
\commonconst_{1,4}=\commonconst_{3,4} < \underline{\commonconst_{2,4} = \commonconst_{4,4}} < \mathbf{\commonconst_{1,3} = \commonconst_{3,3}} < \mathbf{\commonconst_{1,2} = \commonconst_{3,2}} < \underline{\commonconst_{2,3} = \commonconst_{4,3}} < \underline{\commonconst_{2,2} = \commonconst_{4,2}} \\ < \mathbf{\commonconst_{1,1} = \commonconst_{3,1}} < \underline{\commonconst_{4,1}} < \mathbf{\commonconst_{1,0}} < \underline{\commonconst_{2,1}} < \underline{\commonconst_{2,0}} < \mathbf{\commonconst_{3,0}} < \underline{\commonconst_{4,0}}.
\end{align*}

We now show that each \crule created by this process has the properties described by Definition~\ref{def:coloring-rule}. Because the high-exponent equivalence class $\commonconst_{\inumpoints_1,d-1} = \commonconst_{\inumpoints_2,d-1} = \cdots = \commonconst_{\inumpoints_m,d-1}$ was added at the start of the \crulenospace, the high-exponent coefficients of each term are smaller than the low-exponent coefficients (see
Definition~\ref{def:crule-coef} for a definition of coefficients). 
Criterion~\ref{cr:index-ordering} is satisfied by changing the indices of the variables.
The remaining criteria are satisfied because $\tau_1$ and $\tau_2$ satisfied them and their internal orders were preserved in permuting the equivalence classes. Therefore this process does not overcount \crulesnospace.

We next show that every \crule of $\binom{\om^d}{\numpoints}$ is counted by this process. Each can be mapped to some $\tau_1$ and $\tau_2$ that create it by the argument 2 paragraphs above which proves that the process creates unique \crulesnospace. Every \crule of $\binom{\om^d}{\numpoints}$ must have a leading equivalence class of $\commonconst_{\inumpoints_1,d-1} = \commonconst_{\inumpoints_2,d-1} = \cdots = \commonconst_{\inumpoints_m,d-1}$ to 
satisfy Definition~\ref{def:coloring-rule} 
(the equivalence class might only contain one variable); taking only the variables $\commonconst_{\inumpoints,\idegr}$ with indices appearing in that equivalence class (but not those variables in the equivalence class itself) forms $\tau_1$, a \crule for $\binom{\om^{d-1}}{m}$. The variables with $\inumpoints$ indices not in the equivalence class form $\tau_2$, a \crule for $\binom{\om^d}{\numpoints-m}$. The original \crule of $\binom{\om^d}{\numpoints}$ is counted by interleaving $\tau_1$ with $\tau_2$ and inserting the leading equivalence class of $\commonconst_{\inumpoints_1,d-1} = \commonconst_{\inumpoints_2,d-1} = \cdots = \commonconst_{\inumpoints_m,d-1}$.
Therefore the final case of the result holds.
\end{proof}

For more information, see the OEIS~\cite{OEIS}, where $P(\numpoints,\om^2)$ is sequence A000311 and $P(2,\om^d)$ is A079309. Both of these
sequences have combinatorial interpretations outside of Ramsey
theory.
Values for small $\numpoints,d$ are tabulated in the appendix; see Table~\ref{tab:om-d-a}. This table has been added to the OEIS, see sequence A364026. It is worth noting that the recurrence $$
\sum\limits_{j=1}^\numpoints \sum\limits_{i=0}^{p-1} \binom{p-1}{i} P_i \left( j,\om^{d-1} \right) P_{p-1-i} \left( \numpoints-j,\om^d\right)
$$ can be simplified using exponential generating functions of the form $$
F(n,\om^d)[x] = \sum_{p=0}^\infty \frac{1}{p!} P_p(n,\om^d) x^p.
$$

\subsection{\texorpdfstring{$T(\numpoints,\om^d) \leq P(\numpoints,\om^d)$}
 {T(\numpoints, omega d) <= P(\numpoints, omega d)}}

We use the following lemma to show that \crules bound big Ramsey degrees from above.

\begin{lemma}
\label{le:bend-nats}
For $\numpoints, d \in \N$ and some ordered set
$G \subseteq \om$ with $G \oequiv \om$, there exists some $H \subseteq \om^d$ with $H \oequiv \om^d$ where for all $e \in \binom{H}{\numpoints}$, $e$ satisfies a \crule of $\binom{\om^d}{\numpoints}$ and each coefficient of $e$ (see Definition~\ref{def:crule-coef}) is contained in $G$.
\end{lemma}
\begin{proof}
Let $G = \{
\commonconst_1 < \commonconst_2 < \commonconst_3 < \cdots\}$ with each
$\commonconst_{\inumpoints} \in \om$ and $G \oequiv \om$. We proceed by induction on $d$.

When $d=0$, $\om^0 \oequiv 1$ and so $H = \{0\} = 1$ suffices. When $n < 2,$
the only \crule of $\binom{H}{n} = \binom{1}{n}$ is satisfied by every edge
of $\binom{1}{n}$ and has $n \cdot d = 0$ coefficients:
the $1$ in $\om^0 \cdot 1$ is a $b_i$
term, not an $a_{i,j}.$
When $n\geq 2,$ there are no \crules of $\binom{1}{n}$ so the result
is vacuously satisfied.

When $d=1$, $\om^1 \oequiv \om$ so $H=G$ suffices, as the single \crule for $\binom{\om}{n}$
is satisfied by all $n$-subsets of $\om,$ and their $n$ coefficients are in $H,$ hence in $G.$

For $d \geq 2$, partition $G 
$ into infinite sets, each order-equivalent to $\om$: \begin{align*}
A_0 & = \{\commonconst_1,\commonconst_3,\commonconst_5,\ldots\}\\
A_1 & = \{\commonconst_2,\commonconst_6,\commonconst_{10},\ldots\}\\
A_2 & = \{\commonconst_4,\commonconst_{12},\commonconst_{20},\ldots\}\\
A_3 & = \{\commonconst_8,\commonconst_{24},\commonconst_{40},\ldots\}\\
& ~~\vdots
\end{align*}
Formally, $$
\Commonconst_i = \{\commonconst_j \colon j = 2^i + k2^{i+1}, k \in \N \}.
$$
Reserve $\Commonconst_0$ for later. Apply the inductive hypothesis, taking $G$ to be $\Commonconst_i,$ for all $i \geq 1$. This yields $S_i \oequiv \om^{d-1}$ for all $i \geq 1$. Then for all $i \geq 1$, for all $e \in \binom{S_i}{\numpoints}$, $e$ satisfies a \crule of $\binom{\om^{d-1}}{\numpoints}$ and each coefficient of $e$ is contained in $\Commonconst_i$. We now pair each element of $\Commonconst_0$ with an $S_i.$ Let \begin{align*}
H = & ~~~~\!\{\om^{d-1}\commonconst_{1} + \beta : \beta \in S_0\} \\
 & \cup \{\om^{d-1}\commonconst_{3} + \beta : \beta \in S_1\} \\
 & \cup \{\om^{d-1}\commonconst_{5} + \beta : \beta \in S_2 \}\\
& \cup ~\cdots\\
= & \bigcup_{i = 0}^\omega H_i\text{, where } H_i=
\{\om^{d-1}\commonconst_{2i+1} + \beta : \beta \in S_i\},
\end{align*}

ordered by ordinal comparison. Every element of $H_i$ is less than every element of $H_{i+1}$. Also, each $H_i$ is order-isomorphic to $S_i$ and thus $\omega^{d-1}$.
Therefore, $H \oequiv \om^{d}$.

For any edge $e$ in $\binom{H}{\numpoints}$, index its variables to satisfy criterion~\ref{cr:index-ordering} of Definition~\ref{def:coloring-rule} (this is possible because all low-exponent coefficients are distinct in $H$).
Because we constructed the $\om$ rows of $H$ out of $S_i$ with distinct coefficents,
criterion~\ref{cr:y-distinct} is satisfied.
Criterion~\ref{cr:high-first} 
is satisfied inductively for variables with exponents lower than $d-1$. Because $\min \Commonconst_i = \commonconst_{2^i}$ for all $i$ and $2i-1 < 2^i$ for all naturals
$i \geq 1$, $\commonconst_{2i-1}<\commonconst$ for all $\commonconst \in \Commonconst_i$. So criterion~\ref{cr:high-first} is satisfied by $e$. Because $\Commonconst_0$ is disjoint from all $\Commonconst_i$ with $i \geq 1$, criterion~\ref{cr:term-dim-diff} is satisfied for variables with exponent $d-1,$ and by induction, it is satisfied for lower exponents. Because $\Commonconst_i$ is disjoint with $\Commonconst_j$ for all $i \neq j$, elements that differ in variables with exponent $d-1$ differ in all lower-exponent variables. The induction with the previous statement satisfies criterion~\ref{cr:high-split}. Therefore $e$ satisfies a \crule of $\binom{\om^d}{\numpoints}$. The coefficients in $e$ are contained in $G$ by the construction of $H$ from $G$.
\end{proof}

\begin{theorem}\label{th:wd-LT}
For $\numpoints,d \in \N$, $T\left(\numpoints, \om^d\right) \leq P\left(\numpoints, \om^d\right)$.
\end{theorem}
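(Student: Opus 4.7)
The plan is to generalize the strategy of Theorem~\ref{th:om2}: enumerate all \crulesnospace, define a covering family of functions (one per \crulenospace), color $\binom{\om}{\numpoints \cdot d}$ by the tuple of $\COL$-values these functions produce, apply Ramsey, and then invoke Lemma~\ref{le:bend-nats} to bend $\om^d$ into a set where every edge satisfies some \crulenospace. Concretely, I would enumerate the $P := P(\numpoints, \om^d)$ \crules as $r_1, \ldots, r_P$, with respective sizes $p_1, \ldots, p_P \leq \numpoints \cdot d$, and for each $r_i$ define $f_i \colon \binom{\om}{\numpoints \cdot d} \to \binom{\om^d}{\numpoints}$ so that, on input $\{x_1 < \cdots < x_{\numpoints \cdot d}\}$, it outputs the unique edge satisfying $r_i$ whose $p_i$ equivalence classes under $\xleq$ take the values $x_1 < \cdots < x_{p_i}$ in order (the remaining $x_j$ are unused). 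I would then define $\COL' \colon \binom{\om}{\numpoints \cdot d} \to [\numcolors]^P$ by
$$\COL'(X) = (\COL(f_1(X)), \COL(f_2(X)), \ldots, \COL(f_P(X))).$$

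Applying Theorem~\ref{th:ramsey} to $\COL'$ yields $G \oequiv \om$ that is $1$-homogeneous under $\COL'$, say with constant output tuple $(y_1, \ldots, y_P)$. Applying Lemma~\ref{le:bend-nats} to this $G$ produces $H \subseteq \om^d$ with $H \oequiv \om^d$ such that every $e \in \binom{H}{\numpoints}$ satisfies some \crule and every coefficient of $e$ lies in $G$. For any such $e$ satisfying $r_i$, the coefficients of $e$ realize exactly $p_i$ distinct values $v_1 < \cdots < v_{p_i}$ in $G$ --- one per equivalence class. Choosing any $X \in \binom{G}{\numpoints \cdot d}$ whose first $p_i$ elements are $v_1, \ldots, v_{p_i}$ gives $f_i(X) = e$ by construction, hence $\COL(e) = y_i$. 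Thus $|\COL(\binom{H}{\numpoints})| \leq P$, proving $T(\numpoints, \om^d) \leq P(\numpoints, \om^d)$.

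The main obstacle is making the functions $f_i$ precise and verifying their invertibility: once $r_i$ and an ordered list of values for its equivalence classes are specified, the resulting edge must be uniquely determined and a valid element of $\binom{\om^d}{\numpoints}$. This is where criteria~\ref{cr:index-ordering}--\ref{cr:high-split} of Definition~\ref{def:coloring-rule} earn their keep: criterion~\ref{cr:index-ordering} fixes how the $\numpoints$ elements are indexed from their lowest-exponent coefficients, ensuring distinctness, while criteria~\ref{cr:y-distinct}--\ref{cr:high-split} guarantee that the tiered representation $\om^d \cdot \rareconst + \om^{d-1} \cdot \commonconst_{d-1} + \cdots + \commonconst_0$ encoded by $r_i$ is internally consistent, so that $f_i(X)$ indeed realizes the prescribed preorder on coefficients.
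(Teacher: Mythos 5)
Your proposal is correct and follows essentially the same route as the paper's proof: enumerate the \crulesnospace, define one function $f_i$ per rule sending the $p_i$ least elements of $X$ to the equivalence classes of the unique edge satisfying that rule, apply Theorem~\ref{th:ramsey} to the product coloring, and then use Lemma~\ref{le:bend-nats} to obtain $H$ in which every edge satisfies some rule with coefficients drawn from $G$. The padding step (extending the $p_i$ realized values to a full $(\numpoints\cdot d)$-subset of $G$ using larger elements) is exactly how the paper closes the argument as well.
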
 

\begin{proof}
Let $\numcolors \in \N$ and let \begin{align*}
    \COL \colon \binom{\om^d}{\numpoints} \to [\numcolors]
\end{align*}
be a $\numcolors$-coloring of $\binom{\om^d}{\numpoints}$. 

Enumerate the \crules of $\binom{\om^d}{\numpoints}$ as $\tau_0$ to $\tau_{P(\numpoints, \om^d)-1}$. The maximum size of any \crule of $\binom{\om^d}{\numpoints}$ is $\numpoints \cdot d$. For each $\tau_i$, let \begin{align*}
    f_i \colon \binom{\om}{\numpoints \cdot d} \to \binom{\om^d}{\numpoints}
\end{align*}
where if $\tau_i$ has size $p$, $f_i$ maps $X$ to the unique $e \in \binom{\om^d}{\numpoints}$ where $e$ satisfies $\tau_i$ and the $p$ equivalence classes of $e$ are made up of the $p$ least elements of $X$. For example, one \crule of $\binom{\om^2}{2}$ is $$
\commonconst_{1, 1} = \commonconst_{2, 1} < \commonconst_{1, 0} < \commonconst_{2, 0}.
$$ The corresponding $f_i$ would be $f_i \colon \binom{\om}{4} \to \binom{\om^2}{2}$ with \begin{align*}
f_i(x_1,x_2,x_3,x_4) = \{\om \cdot x_1 + x_2, \om \cdot x_1 + x_3\}
\end{align*} where $x_1 < x_2 < x_3 < x_4$. Note that $f_i$ does not depend on $x_4$ -- this is because the example \crule has size 3, but the largest \crule for $\binom{\om^2}{2}$ has size 4.

Then, define $\COL' \colon \binom{\om}{\numpoints \cdot d} \to [\numcolors]^{P(\numpoints,\om^d)}$ with \begin{align*}
    \COL'(X) = (\COL(f_0(X)),\COL(f_1(X)),\ldots,\COL(f_{P(\numpoints, \om^d)-1}(X)))
\end{align*}
and apply Theorem~\ref{th:ramsey} to find some $G \oequiv \om$ where
\begin{align*}
 \left |  \COL' \left( \binom{G}{\numpoints \cdot d} \right)\right | = 1.
\end{align*}
Let the one color in $\COL'\left(\binom{G}{\numpoints \cdot d}\right)$ be $\Rareconst$.
Note that $\Rareconst$ is a tuple of $P(\numpoints, \om^d)$ colors.

Apply Lemma~\ref{le:bend-nats} to find some $H \oequiv \om^d$ with the properties listed in Lemma~\ref{le:bend-nats}. Now we claim \begin{align*}
    \left | \COL \left( \binom{H}{\numpoints} \right)\right |\leq P(\numpoints, \om^d).
\end{align*}

By Lemma~\ref{le:bend-nats}, each element $e \in \binom{H}{\numpoints}$ satisfies a \crule of $\binom{\om^d}{\numpoints}$. 
Then for any arbitrary edge $e$, let $e$ satisfy $\tau_i$ of size $p \leq \numpoints \cdot d$. 
Then take the $p$ unique values in $e$, and if necessary, insert any new larger naturals from 
$G$ to form a set of $\numpoints \cdot d$ values; denote this $\Commonconst \in \binom{G}{\numpoints \cdot d}$.
Then
$\COL'(\Commonconst) = \Rareconst$ so by the definition of $\COL'$, $\COL(e) \in \Rareconst$. 
Because $|\Rareconst| = P(\numpoints,\om^d)$, $T(\numpoints,\om^d) \leq P(\numpoints,\om^d)$.
\end{proof}

\subsection{\texorpdfstring{$T(\numpoints,\omega^d) \geq P(\numpoints, \om^d)$}{T(\numpoints, omega d) >= P(\numpoints, omega d)}}

\begin{theorem}\label{th:wd-GT}
For $\numpoints,d \in \N$, $T\left(\numpoints,\om^d\right) \geq P\left(\numpoints,\om^d\right)$. Therefore by Theorem~\ref{th:wd-LT}, $T(\numpoints,\om^d) =  P(\numpoints,\om^d)$.
\end{theorem}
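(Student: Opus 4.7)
The plan is to construct a coloring with exactly $P(\numpoints,\om^d)$ colors that forces every $H \oequiv \om^d$ to use all of them. Enumerate the coloring rules as $\tau_1, \ldots, \tau_{P(\numpoints,\om^d)}$ and define $\COL\colon \binom{\om^d}{\numpoints}\to[P(\numpoints,\om^d)]$ by $\COL(e)=i$ when $e$ satisfies $\tau_i$, with $\COL(e)=1$ as a default for edges that satisfy no rule. The lower bound then reduces to the key claim: for every $H \oequiv \om^d$ and every rule $\tau$, some $e \in \binom{H}{\numpoints}$ satisfies $\tau$. Combining this claim with Theorem~\ref{th:wd-LT} yields $T(\numpoints,\om^d)=P(\numpoints,\om^d)$.

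To prove the key claim I first refine $H$ to a subset $H^* \subseteq H$ with $H^* \oequiv \om^d$ that has the same recursive ``full tree'' structure as the $H$ produced by Lemma~\ref{le:bend-nats}. Concretely, at the top level $H^*$ decomposes as $\bigsqcup_{k \in \om} R_k$ with each $R_k$ contained in a single row of $\om^d$ (so $R_k = \{\om^{d-1}\alpha_k + x : x \in T_k\}$ with $\alpha_k \in \om$ and $T_k \oequiv \om^{d-1}$), and recursively each $T_k$ admits the same decomposition. The refinement is built by induction on $d$: in the natural block decomposition $H = H_0 \setconcat H_1 \setconcat \cdots$ with each $H_i \oequiv \om^{d-1}$, the supremum $\sup H_i$ lies strictly below $\om^d$, forcing the leading coefficients of elements of $H_i$ to occupy only finitely many rows of $\om^d$; the ordinal fact that a finite sum of ordinals less than $\om^{d-1}$ is itself less than $\om^{d-1}$ then forces some single row to contain a subset of $H_i$ of order type $\om^{d-1}$, and we recurse inside that subset.

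With $H^*$ in hand, I realize $\tau$ by greedy assignment. List $\tau$'s equivalence classes in the increasing order induced by $\xleq$ as $E_1, E_2, \ldots, E_p$ and pick values $v_1 < v_2 < \cdots < v_p$ in $\om$, setting every variable in $E_q$ equal to $v_q$. Criterion~\ref{cr:high-split} of Definition~\ref{def:coloring-rule} propagates equality at one exponent upward to all higher exponents, so all variables in $E_q$ share identical higher-exponent coefficients and therefore lie on a common branch of $H^*$'s tree. The full tree structure guarantees an infinite set of available coefficients on that branch, so each $v_q > v_{q-1}$ can indeed be chosen. The resulting $\numpoints$ elements lie in $H^*$, and a direct check against the criteria of Definition~\ref{def:coloring-rule} confirms that the edge they form satisfies $\tau$. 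The main obstacle is the refinement step, whose ordinal arithmetic must be handled carefully at every level of recursion; once the tree structure of $H^*$ is in place, the realization is essentially bookkeeping.
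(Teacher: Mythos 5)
Your proposal is correct and takes essentially the same approach as the paper: the same rule-indexed coloring with a default value, followed by a greedy realization of each \crule in an arbitrary $H \oequiv \om^d$ by assigning strictly increasing values to its equivalence classes in $\xleq$-order, using the high-first and high-split criteria to keep same-class variables on a common branch. The only difference is organizational: you package the required ordinal arithmetic into a preliminary refinement of $H$ to a tree-structured $H^*$, whereas the paper establishes the same fact (a subset of order type $\om^{\idegr+1}$ has, above any finite bound, a row of order type $\om^{\idegr}$) on the fly as an invariant maintained during the assignment.
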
 

\begin{proof}
If $P(\numpoints,\om^d) = 0$, then this is satisfied vacuously because $T(\numpoints, \om^d) \geq 0$. Now suppose $P(\numpoints,\om^d) \geq 1$. Note that all \crules of $\binom{\om^d}{\numpoints}$ are disjoint from each other. That is, for any edge $e \in \binom{\om^d}{\numpoints}$, if $e$ satisfies $\tau'$, then it does not satisfy any nonequivalent \crule of $\binom{\om^d}{\numpoints},$ where
equivalence between \crules is defined by their $\assny$ maps and $\xleq$ orders being
the same. This is because if $e$ were to satisfy two \crules $\tau_1$ and $\tau_2$, then $\tau_1$ and $\tau_2$ must share the same equivalence classes and order, so the \crules must be equivalent. Therefore, we can index them $\tau_0, \ldots, \tau_{P(\numpoints, \om^d)-1}$ and construct a coloring $\COL \colon \binom{\om^d}{\numpoints} \to [P(\numpoints, \om^d)]$ with\begin{align*}
    \COL(e) = \begin{cases}
    i & e \textnormal{ satisfies } \tau_i\\
    0 & \textnormal{otherwise}
    \end{cases}
\end{align*}
Similar to the proof of Theorem~\ref{th:om2}, our coloring has two ways to output color 0, both 
through the satisfaction of $\tau_0$ and through the catch-all case. The part that forces 
color 0 to be present in all order-equivalent subsets is the satisfaction of $\tau_0$.

We now prove there is no $\omega^d$-$(P(\numpoints,\om^d)-1)$-homogeneous set. 
For all $H \oequiv \om^{d}$ and 
for every \crule $\tau$ of $\binom{\om^d}{\numpoints}$, 
we find some $e \in \binom{H}{\numpoints}$ that satisfies $\tau$.
For arbitrary $H \oequiv \om^{d}$ and $\tau$, we find $e_{\inumpoints,\idegr}$ where 
\begin{align*}
    \{\om^{d-1} e_{1,d-1} + \cdots + \om^{1} e_{1,1} + e_{1,0}, \ldots,
    \om^{d-1} e_{\numpoints,d-1} + \cdots + \om^{1} e_{\numpoints,1} + e_{\numpoints,0}\}
\end{align*}
satisfies $\tau$.

We do this by assigning values to each $e_{\inumpoints,\idegr}$ according to where the equivalence class
that contains $\commonconst_{\inumpoints,\idegr}$ is found in $\tau$, moving left to right in the permutation of $\tau$. By criterion~\ref{cr:high-first}
of Definition~\ref{def:coloring-rule}, 
each $e_{\inumpoints,\idegr}$ is assigned before $e_{\inumpoints,\idegr-1}$. As we do this, we ensure that if the leftmost unassigned value in $\tau$ is $e_{\inumpoints,\idegr}$, then \begin{multline*}\{ \om^{d-1} e_{\inumpoints,d-1} + \cdots + \om^{\idegr+1} e_{\inumpoints,\idegr+1} + \om^\idegr c_{\idegr} + \om^{\idegr-1} c_{\idegr-1} + \cdots + \om^1 c_1 + c_0 : c_i \in \om\} \cap H \oequiv \om^{\idegr+1}.\end{multline*}

By criterion~\ref{cr:high-first} of 
Definition~\ref{def:coloring-rule}, 
the leftmost variable in $\tau$ must be $\commonconst_{\inumpoints,d-1}$. Before any values are assigned, it is clear that \begin{align*}\{ \om^{d-1} c_{d-1} + \cdots + \om^1 c_1 + c_0 : c_i \in \om\} = \om^{d},\end{align*} and because $H \subseteq \om^{d}$, $\om^{d} \cap H = H \oequiv \om^{d}$.

By criterion~\ref{cr:term-dim-diff} of 
Definition~\ref{def:coloring-rule}, all variables in an equivalence class must have the same exponent $d$. Let the leftmost unassigned equivalence class in $\tau$ be $\commonconst_{\inumpoints_1,\idegr} = \commonconst_{\inumpoints_2,\idegr} = \cdots = \commonconst_{\inumpoints_m,\idegr}$.
By criterion~\ref{cr:high-first}, each $\commonconst_{\inumpoints_i,\ell}$ for $1 \leq i \leq m$ and $\ell > \idegr$ appeared to the left of this equivalence class and has already been assigned a value, and by criterion~\ref{cr:high-split} the values for each exponent are equal: for all $\ell > \idegr$ and $1 \leq i \leq m$, $e_{\inumpoints_i,\ell} = e_{\inumpoints_1,\ell}$.

By our previous steps, \begin{multline*}\{ \om^{d-1} e_{\inumpoints_1,d-1} + \cdots + \om^{\idegr+1} e_{\inumpoints_1,\idegr+1} + \om^{\idegr} c_{\idegr} + \om^{\idegr-1} c_{\idegr-1} + \cdots + \om^1 c_1 + c_0 : c_i \in \om\} \cap H \oequiv \om^{\idegr+1}.\end{multline*}
Then there exists some value $e'$ where \begin{multline*}\{ \om^{d-1} e_{\inumpoints_1,d-1} + \cdots + \om^{\idegr+1} e_{\inumpoints_1,\idegr+1} + \om^{\idegr} e' + \om^{\idegr-1} c_{\idegr-1} + \cdots + \om^1 c_1 + c_0 : c_i \in \om\} \cap H \oequiv \om^{\idegr},
\end{multline*} where $e'$ is greater than all previously assigned (and therefore finite) $e_{\inumpoints,\idegr}$ values. Then for $1 \leq i \leq m$, assign $e_{\inumpoints_i,\idegr}$ to be $e'$.

We can repeat this process to find $e_{\inumpoints,\idegr}$ for each \crule of $\binom{\om^d}{\numpoints}$ for arbitrary $H \oequiv \om^{d}$. Therefore for all 
$H \oequiv \omega^d$,
$\left|\COL\left(\binom{H}{\numpoints}\right)\right|\geq P(\numpoints,\om^d)$ so
$T(\numpoints,\om^d) \geq P\left(\numpoints, \om^d\right)$. With
Theorem~\ref{th:wd-LT}, we have
$T(\numpoints,\om^d) = P\left(\numpoints, \om^d\right)$.

\end{proof}

\section{Big Ramsey Degrees of \texorpdfstring{$\om^d \cdot k$}{omega d * k}}\label{se:ord2} 

We now use the theory we developed for the case $k=1$ to prove results for arbitrary $k$. We first extend the recurrence from Lemma~\ref{le:P-recur}.

\begin{lemma}
\label{le:P-recur-k}
For $\numpoints,d,k \in \N$,

\begin{multline*}
    P_p\left( \numpoints,\om^d \cdot k\right) = \\
    \begin{cases}
    0 & \textnormal{(1)\phantom{0} } d = 0 \land \numpoints > k\\
    1 & \textnormal{(2)\phantom{0} } \numpoints = 0 \land p = 0\\
    0 & \textnormal{(3)\phantom{0} } \numpoints = 0 \land p \geq 1\\
    \binom{k}{\numpoints} & \textnormal{(4)\phantom{0} } d = 0 \land 1 \leq \numpoints \leq k \land p = 0\\
    0 & \textnormal{(5)\phantom{0} } d = 0 \land 1 \leq \numpoints \leq k \land p \geq 1\\
    k^\numpoints & \textnormal{(6)\phantom{0} } d = 1 \land \numpoints \geq 1 \land \numpoints = p\\
    0 & \textnormal{(7)\phantom{0} } d = 1 \land \numpoints \geq 1 \land \numpoints \neq p\\
    0 & \textnormal{(8)\phantom{0} } d \geq 2 \land \numpoints \geq 1 \land p = 0\\
    k \sum\limits_{j=1}^\numpoints
    \sum\limits_{i=0}^{p-1} \binom{p-1}{i} P_i \left( j,\om^{d-1} \right) P_{p-1-i} \left( \numpoints-j,\om^d \cdot k\right)
    & \textnormal{(9)\phantom{0} } d \geq 2 \land \numpoints \geq 1 \land p \geq 1
    \end{cases}
\end{multline*}

\end{lemma}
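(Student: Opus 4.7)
The plan is to adapt the inductive counting argument of Lemma~\ref{le:P-recur} to $\om^d \cdot k$, where the extra factor of $k$ in the main recurrence traces back to the freedom of choosing a single shared $\rareconst$-value for the leading equivalence class.

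First, I would verify the eight base cases directly against Definition~\ref{def:coloring-rule}. When $d = 0$, the only variables are the $\rareconst_{\inumpoints}$'s, which must be strictly increasing by criterion~\ref{cr:index-ordering}; this gives $0$ rules if $\numpoints > k$, and $\binom{k}{\numpoints}$ rules of size $p = 0$ otherwise (with $0$ rules of any positive size since there are no $\commonconst_{\inumpoints,\idegr}$). The $\numpoints = 0$ case is immediate: only the empty rule exists, of size $0$. For $d = 1$ and $\numpoints \geq 1$, criterion~\ref{cr:index-ordering} forces $\commonconst_{1,0} < \commonconst_{2,0} < \cdots < \commonconst_{\numpoints,0}$, so the $\preceq_{\mathcal{X}}$-preorder is uniquely determined with size $p = \numpoints$; criterion~\ref{cr:y-distinct} is then vacuous because all $\commonconst_{\inumpoints,0}$ are strictly distinct, so each $\rareconst_\inumpoints$ is freely chosen, yielding $k^\numpoints$ rules. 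For $d \geq 2$, $\numpoints \geq 1$ there are $\numpoints \cdot d > 0$ variables among the $\commonconst_{\inumpoints, \idegr}$'s, so any \crule has at least one equivalence class and $p \geq 1$.

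The key case is the recurrence for $d \geq 2$, $\numpoints \geq 1$, $p \geq 1$. I would replay the decomposition from the proof of Lemma~\ref{le:P-recur} case~5: for any \crule of $\binom{\om^d \cdot k}{\numpoints}$, criteria~\ref{cr:high-first} and~\ref{cr:term-dim-diff} imply that the leftmost equivalence class under $\xleq$ consists of some collection $\commonconst_{\inumpoints_1, d-1} = \cdots = \commonconst_{\inumpoints_m, d-1}$ of $m$ highest-exponent variables. Criterion~\ref{cr:y-distinct} then forces $\rareconst_{\inumpoints_1} = \cdots = \rareconst_{\inumpoints_m}$ to a common value $v \in \{0, \ldots, k-1\}$. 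The sub-\crule $\tau_1$ obtained by restricting to these $m$ elements (after stripping the leading class and recording that the $\rareconst$'s are pinned to $v$) is a \crule of $\binom{\om^{d-1}}{m}$ with effective $k' = 1$, while the sub-\crule $\tau_2$ on the remaining $\numpoints - m$ elements is a \crule of $\binom{\om^d \cdot k}{\numpoints - m}$. Interleaving $\tau_1$ (of some size $i$) with $\tau_2$ (of size $p - 1 - i$) contributes the factor $\binom{p-1}{i}$, and reindexing proceeds identically. Summing over $m$, $i$, and the $k$ choices of $v$ gives the stated formula.

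The main obstacle is justifying cleanly that the value $v$ is genuinely independent of the $\rareconst$-values in $\tau_2$, so that $k$ factors out rather than being absorbed into $\tau_1$ or $\tau_2$. This relies on criterion~\ref{cr:high-split}: since every leading-class index $\inumpoints_s$ differs from every non-leading index $\inumpoints'$ in the variable $\commonconst_{\cdot, d-1}$, iterated application of criterion~\ref{cr:high-split} propagates this inequality down to every lower exponent. Hence no $\commonconst_{\inumpoints_s, \idegr}$ equals any $\commonconst_{\inumpoints', \idegr}$, so criterion~\ref{cr:y-distinct} imposes no constraint tying $v$ to any $\rareconst$-value recorded in $\tau_2$. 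This decoupling is exactly what makes $\tau_1$ live over $\om^{d-1}$ (not $\om^{d-1} \cdot k$) and lets $k$ appear as a clean multiplicative prefactor, completing the recurrence.
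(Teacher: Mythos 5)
Your proposal is correct and follows essentially the same argument as the paper: the same case analysis for the base cases, and for $d \geq 2$ the same bijection via the leading equivalence class of highest-exponent variables, with criterion~\ref{cr:y-distinct} forcing a single shared $\rareconst$-value that contributes the prefactor $k$ and the interleaving contributing $\binom{p-1}{i}$. Your explicit appeal to criterion~\ref{cr:high-split} to justify that this shared value is decoupled from the $\rareconst$-values in $\tau_2$ is a slightly more detailed justification than the paper gives, but it is the same decomposition.
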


\begin{proof}
\case{1} First, suppose $\numpoints > k$ and $d = 0$. Since $0 \leq \rareconst_{\inumpoints} < k$ for all $\rareconst_{\inumpoints}$, there are at most $k$
unique values for the $\rareconst_{\inumpoints}$. But by criterion~\ref{cr:index-ordering} of Definition~\ref{def:coloring-rule}, since $d = 0,$ we need $\numpoints$ unique values of $\rareconst_{\inumpoints}$, so no \crules are possible regardless of size $p$. This proves the first case of the result.

\case{2} Suppose $\numpoints = 0$. Then there can be no $\rareconst_{\inumpoints}$, and since $\numpoints \cdot d = 0$, there can be no $\commonconst_{\inumpoints,\idegr}$. Thus all criteria are vacuously satisfied. Because there are no $\rareconst_{\inumpoints}$ or $\commonconst_{\inumpoints,\idegr}$, there is only one \crulenospace, and it has size $p = 0$. This proves the second and third cases of the result.

\case{3} When both $d = 0$ and $\numpoints \leq k$,
criterion~\ref{cr:index-ordering} of Definition~\ref{def:coloring-rule} can be satisfied with the assignments to the $\numpoints$ variables $\rareconst_{\inumpoints}$ being any permutation of $\numpoints$ unique values out of $k$ possible natural values.
This leads to $\binom{k}{\numpoints}$ feasible combinations. Again, because $\numpoints \cdot d = 0$, there are no variables $\commonconst_{\inumpoints,\idegr}$ to permute so there are $\binom{k}{\numpoints}$ empty \crules of size $p=0$, which proves the fourth and fifth cases of the result.

\case{4} Now suppose $\numpoints \geq 1$ and $d = 1$. To ensure criteria~\ref{cr:index-ordering} of Definition~\ref{def:coloring-rule}, each of the $\numpoints$ variables  $\commonconst_{\inumpoints,0}$ can only form one permutation $\commonconst_{1,0} < \commonconst_{2,0} < \ldots < \commonconst_{\numpoints,0}$ of size $\numpoints$. Because all $\commonconst_{\inumpoints,\idegr}$ are distinct and $d = 1$, the values $\rareconst_{\inumpoints}$ are not restricted by any criteria so each of the $\numpoints$ variables can be any of the $k$ naturals. Therefore $P_{\numpoints}\left(\numpoints, \om^d \cdot k \right) = k^{\numpoints}$ and $P_p\left(\numpoints, \om^d \cdot k \right) = 0$ for $p \neq \numpoints$. This proves the sixth and seventh cases of the result.

\case{5} Finally, consider $\numpoints \geq 1, d \geq 2$. By the definition of a \crule, because $\numpoints \cdot d > 0,$ there are no \crules of size $p=0.$ This proves the eighth
case of our result. We prove the final case by showing the process for combining \crules described below creates all possible \crules of an expression.

Let $\numpoints \geq 1$, $d \geq 2$, $k \geq 0$ be naturals. Let $p \geq 1$, $1 \leq m \leq \numpoints$ and $0 \leq s \leq p-1$ be naturals.
We create $$k \binom{p-1}{s} P_s \left( m,\om^{d-1} \right) P_{p-1-s} \left( \numpoints-m,\om^d \cdot k\right)$$ \crules of size $p$ by combining $P_s \left( m,\om^{d-1} \right)$ \crules of size $s$ and $P_{p-1-s} \left( \numpoints-m,\om^d \cdot k\right)$ \crules of size $p-1-s$, with $k \binom{p-1}{s}$ new \crules for each pair of smaller \crulesnospace.

Let $\tau_1$ represent one of the $P_s\left(m, \om^{d-1}\right)$ \crules of $\binom{\om^{d-1}}{m}$ of size $s$, and $\tau_2$ represent one of the $P_{p-1-s}\left(\numpoints-m, \om^d \cdot k\right)$ \crules of $\binom{\om^{d} \cdot k}{\numpoints-m}$ of size $p-1-s$. 

Then we can combine each $\tau_1$ and $\tau_2$ to form $k \binom{p-1}{s}$ unique new \crules of size $p.$ First, we intertwine them as in the proof of Lemma~\ref{le:P-recur}: change the indices of the $\commonconst_{\inumpoints,\idegr}$ in $\tau_2$ to $\commonconst_{\inumpoints+m,\idegr}$ and combine the equivalence classes of $\tau_1$ and $\tau_2$ while preserving their original orders. Insert a leading equivalence class $\commonconst_{\inumpoints_1,d-1} = \commonconst_{\inumpoints_2,d-1} = \cdots = \commonconst_{\inumpoints_m,d-1}.$ This leads to $\binom{p-1}{s}$ new permutations of the $\commonconst_{\inumpoints,\idegr}$.

Because $\tau_1$ was a \crule for $\binom{\om^{d-1}}{m}$, each of its $\rareconst_{\inumpoints}$ had a value of 0. Now that we are creating a 
\crule for $\binom{\om^d \cdot k}{\numpoints}$, we can choose the $\rareconst_{\inumpoints}$ to be
composed of values between 0 and $k-1$. By criterion~\ref{cr:y-distinct} of Definition~\ref{def:coloring-rule}, because all elements from $\tau_1$ are bound together in a leading high-exponent equivalence class, the $\rareconst_{\inumpoints}$ must all be equal to one value. This leads to $k$ options for that value; with the options of permuting the $\commonconst_{\inumpoints,\idegr}$, this makes $k \binom{p-s}{s}$ ways to create a new \crulenospace.

For the new \crulenospace's values for $\rareconst_{\inumpoints}$, we assign each element originally from $\tau_2$ with its original $\rareconst$ value (likely at a different index due to changing the indices earlier). Then, the remaining elements from $\tau_1$ are given all the same 
$\rareconst$
value from one of the $k$ options.

Each \crule is unique by the $\tau_1$ and $\tau_2$ used to create it because the process is invertible: we can remove the leading equivalence class and separate the remaining variables into $\tau_1$ and $\tau_2$ by whether their indices were in the leading equivalence class and changing the indices. The $\rareconst$ values for $\tau_2$ can be found from the \crulenospace's $\rareconst$ values after reversing the index change, and the $\rareconst$ values for $\tau_1$ are all 0.

We claim each \crule created by this process has the properties described by Definition~\ref{def:coloring-rule}: Because the high-exponent equivalence class $\commonconst_{\inumpoints_1,d-1} = \commonconst_{\inumpoints_2,d-1} = \cdots = \commonconst_{\inumpoints_m,d-1}$ was added at the start of the \crulenospace, the high-exponent coefficients of each term are smaller than the low-exponent coefficients, where coefficients
were defined in Definition~\ref{def:coef}. 
Criterion~\ref{cr:index-ordering} is satisfied by changing the indices of the variables. Criterion~\ref{cr:y-distinct} is met by assigning all elements from $\tau_1$ the same $\rareconst$ value.
The remaining criteria are satisfied because $\tau_1$ and $\tau_2$ satisfied them and their internal orders were preserved in permuting the equivalence classes. Therefore this process does not overcount \crulesnospace.

We also claim that every \crule of $\binom{\om^d\cdot k}{\numpoints}$ is counted by this process: each can be mapped to some $\tau_1$ and $\tau_2$ that create it by a similar argument to proving that the process creates unique \crules 2 paragraphs above. Every \crule of $\binom{\om^d\cdot k}{\numpoints}$ must have a leading equivalence class of $\commonconst_{\inumpoints_1,d-1} = \commonconst_{\inumpoints_2,d-1} = \cdots = \commonconst_{\inumpoints_m,d-1}$ to satisfy Definition~\ref{def:coloring-rule} 
(the equivalence class might only contain one variable); taking only the variables $\commonconst_{\inumpoints,\idegr}$ with indices appearing in that equivalence class (but not those variables in the equivalence class itself) with all-zero $\rareconst$ values forms $\tau_1$, a \crule for $\binom{\om^{d-1}}{m}$. The variables with $\inumpoints$ indices not in the equivalence class with their $\rareconst$ values form $\tau_2$, a \crule for $\binom{\om^d\cdot k}{\numpoints-m}$. The original \crule of $\binom{\om^d \cdot k}{\numpoints}$ was counted by interleaving $\tau_1$ with $\tau_2$ and inserting the leading equivalence class of $\commonconst_{\inumpoints_1,d-1} = \commonconst_{\inumpoints_2,d-1} = \cdots = \commonconst_{\inumpoints_m,d-1}$.
Therefore the final case of the result holds.
\end{proof}

\subsection{\texorpdfstring{$T(\numpoints,\om^d \cdot k) \leq P(\numpoints,\om^d \cdot k)$}{T(\numpoints, omega d k) <= P(\numpoints, omega d k)}}

\begin{lemma}
\label{le:bend-nats-k}
For $\numpoints,d,k \in \N$ and some ordered subset $G \subseteq \om$ with
$G \oequiv \om$, there exists some $H$ with $H \subseteq \om^d \cdot k$, $H \oequiv \om^d \cdot k$ where for all $e \in \binom{H}{\numpoints}$, $e$ satisfies a \crule of $\binom{\om^d \cdot k}{\numpoints}$ and each coefficient of $e$ (see Definition~\ref{def:crule-coef})
is contained in $G$.
\end{lemma}

\begin{proof}
Let $G = \{\commonconst_0 < \commonconst_1 < \commonconst_2 < \cdots \}$.
When $k = 0,$ $H = \{\} = \emptyset$ suffices.

When $k > 0,$ partition $G$ into $k$ distinct copies: \begin{align*}
    G_0 &= \{a_i : i \equiv 0 \mod k\}\\
    G_1 &= \{a_i : i \equiv 1 \mod k\}\\
    &\vdots\\
    G_{k-1} &= \{a_i : i \equiv k-1 \mod k\}.
\end{align*}
Apply Lemma~\ref{le:bend-nats} on each $G_j$ to yield $H_j \oequiv \om^d,$
and let $$H = 
\{\om^{d} \cdot j + h : h \in H_j\}.$$

We claim that all $e \in \binom{H}{n}$ satisfy a \crulenospace. Since we separated $G$ into disjoint orders $G_{\idegr}$, each $H_{\idegr}$ is disjoint from the others.
With each $e' \in \binom{H_j}{n}$
satisfying a \crulenospace,
criteria~\ref{cr:d-index-ordering}, \ref{cr:d-high-first},
\ref{cr:d-term-dim-diff}, and \ref{cr:d-high-split}
of Definition~\ref{def:coloring-rule}
are automatically satisfied: if some
criterion was not satisfiable and the $H_{\idegr}$ remained distinct, that would
mean problematic elements came from the same part, contradicting how each $e' \in \binom{H_j}{n}$
satisfies a \crulenospace.
Criterion~\ref{cr:d-y-distinct} is satisfied by the construction
of $H$ and each of the
$H_{\idegr}$ being distinct from each other.
The coefficients of all edges $e$ are contained in $G$ by the construction of $H$ from $H_j,$
which have coefficients in $G$ by Lemma~\ref{le:bend-nats}.


\end{proof}

\begin{theorem}\label{th:wdk-LT}
For $\numpoints,d,k \in \N$, $T\left(\numpoints, \om^d \cdot k\right) \leq P\left(\numpoints, \om^d \cdot k\right)$.
\end{theorem}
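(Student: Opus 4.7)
The plan is to adapt the proof of Theorem~\ref{th:wd-LT} so that the functions implementing each \crule also record the assignment $\assny$ of the $\rareconst_{\inumpoints}$ values. Let $\numcolors \in \N$ and let $\COL \colon \binom{\om^d \cdot k}{\numpoints} \to [\numcolors]$ be arbitrary. First, enumerate the \crules of $\binom{\om^d \cdot k}{\numpoints}$ as $\tau_1, \ldots, \tau_{P(\numpoints, \om^d \cdot k)}$, and let $p_i \leq \numpoints \cdot d$ be the size of $\tau_i = (\assny_i, \xleq_i)$.

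For each $\tau_i$, define $f_i \colon \binom{\om}{\numpoints \cdot d} \to \binom{\om^d \cdot k}{\numpoints}$ by sending $X$ to the unique edge whose $\rareconst_{\inumpoints}$ value equals $\assny_i(\inumpoints)$ for every $\inumpoints$, and whose $p_i$ equivalence classes of $\commonconst_{\inumpoints,\idegr}$ values (under $\xleq_i$) are filled, in the order prescribed by $\xleq_i$, by the $p_i$ least elements of $X$; the remaining $\numpoints \cdot d - p_i$ elements of $X$ are simply ignored. Bundle these into a product coloring
$$\COL'(X) \;=\; \bigl(\COL(f_1(X)),\, \COL(f_2(X)),\, \ldots,\, \COL(f_{P(\numpoints,\om^d\cdot k)}(X))\bigr),$$
so that $\COL' \colon \binom{\om}{\numpoints \cdot d} \to [\numcolors]^{P(\numpoints,\om^d\cdot k)}$. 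Apply Theorem~\ref{th:ramsey} to $\COL'$ to obtain $G \oequiv \om$ on which $\COL'$ is constant with value $\Rareconst$.

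Next, apply Lemma~\ref{le:bend-nats-k} to $G$ to produce $H \subseteq \om^d \cdot k$ with $H \oequiv \om^d \cdot k$ such that every $e \in \binom{H}{\numpoints}$ satisfies some \crule and has all of its coefficients contained in $G$. Given such an edge $e$ that satisfies $\tau_i$, take the $p_i$ distinct coefficient values appearing in $e$ and pad with arbitrary larger elements of $G$ to obtain $\Commonconst \in \binom{G}{\numpoints \cdot d}$. Because $e$ satisfies $\tau_i$, the $\rareconst$ values of $e$ match $\assny_i$ and the ordering on its $\commonconst_{\inumpoints,\idegr}$ values matches $\xleq_i$, so $f_i(\Commonconst) = e$. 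Hence $\COL(e) = \COL(f_i(\Commonconst))$ is the $i$-th coordinate of $\COL'(\Commonconst) = \Rareconst$, and therefore $\COL(e)$ lies in the fixed set of at most $P(\numpoints, \om^d \cdot k)$ colors recorded by $\Rareconst$. Since $H \oequiv \om^d \cdot k$, we conclude $T(\numpoints,\om^d \cdot k) \leq P(\numpoints,\om^d \cdot k)$.

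The main obstacle has already been absorbed into Lemma~\ref{le:bend-nats-k}: it guarantees both that every edge of $H$ satisfies \emph{some} \crule and that its coefficients live inside $G$, which is exactly what makes each $f_i$ able to realize the relevant edges of $H$ from subsets of $G$. Once this is available, the only new bookkeeping compared with the $k=1$ case of Theorem~\ref{th:wd-LT} is that $f_i$ must pin each element to the copy of $\om^d$ prescribed by $\assny_i(\inumpoints)$; the rest of the product-coloring-plus-Ramsey argument carries over unchanged.
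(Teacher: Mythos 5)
Your proposal is correct and follows essentially the same route as the paper: enumerate the \crulesnospace, encode each one (including its $\assny$ assignment of the $\rareconst_{\inumpoints}$) as a function $f_i$ from $\binom{\om}{\numpoints\cdot d}$ to edges, take the product coloring, apply Theorem~\ref{th:ramsey}, and then invoke Lemma~\ref{le:bend-nats-k} to get an $H$ whose edges all satisfy some \crule with coefficients in $G$. The only difference is cosmetic: you write the product coloring as $\COL(f_i(X))$ where the paper's displayed formula abbreviates this to $f_i(X)$, and your version is actually the cleaner statement of what is meant.
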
 

\begin{proof}
Let $\numcolors \in \N$ and let \begin{align*}
    \COL \colon \binom{\om^d\cdot k}{\numpoints} \to [\numcolors]
\end{align*}
be a $\numcolors$-coloring of $\binom{\om^d\cdot k}{\numpoints}$.

Enumerate the \crules of $\binom{\om^d\cdot k}{\numpoints}$ from $\tau_0$ to $\tau_{P(\numpoints, \om^d \cdot k)}-1$. The maximum size of any \crule of $\binom{\om^d\cdot k}{\numpoints}$ is $\numpoints \cdot d$. For each $\tau_i,$ let its size be $p_i$. Define \begin{align*}
    f_i \colon \binom{\om}{\numpoints \cdot d} \to \binom{\om^d\cdot k}{\numpoints}
\end{align*}
where $f_i$ maps $X$ to the unique $e \in \binom{\om^d\cdot k}{\numpoints}$
where $e$ satisfies $\tau_i$ and the $p_i$ equivalence classes of $e$ are made up of the $p_i$ least elements of $X$. For example, one \crule of $\binom{\om^2\cdot 2}{2}$ is $$
\rareconst_1=0,~\rareconst_2=1,~\commonconst_{1, 1} < \commonconst_{2, 1} < \commonconst_{2, 0} < \commonconst_{1, 0}.
$$ The corresponding $f_i$ would be $f_i \colon \binom{\om}{4} \to \binom{\om^2\cdot 2}{2}$ with \begin{align*}
f_i(x_1,x_2,x_3,x_4) = \{\om^2 \cdot 0 + \om \cdot x_1 + x_4, \om^2 \cdot 1 + \om \cdot x_2 + x_3\}
\end{align*} where $x_1 < x_2 < x_3 < x_4$. Note that the values of the $\rareconst_{\inumpoints}$ are used directly in the definition of $f_i$ -- for the \crule with identical orderings on $\commonconst_{\inumpoints,\idegr}$ except $\rareconst_0=1$ and $\rareconst_1=0$, the coefficients $\rareconst_{\inumpoints}$ would be swapped.

Then, define $\COL' \colon \binom{\om}{\numpoints \cdot d} \to [\numcolors]^{P(\numpoints,\om^d \cdot k)}$ with \begin{align*}
    \COL'(X) = (\COL(f_0(X)),\COL(f_1(X)),\ldots,\COL(f_{P(\numpoints, \om^d \cdot k)-1}(X)))
\end{align*}
and apply Theorem~\ref{th:ramsey} to find some $G \oequiv \om$ where \begin{align*}
\left |    \COL' \left( \binom{G}{\numpoints \cdot d} \right)\right |=1.
\end{align*}
Let $\Rareconst$ be the one color in $\COL'\left(\binom{G}{\numpoints\cdot d}\right)$.
Note that $\Rareconst$ is a tuple of $P(\numpoints, \om^d \cdot k)$ colors.

Apply Lemma~\ref{le:bend-nats-k} to find some $H \oequiv \om^d \cdot k$ with the properties listed in Lemma~\ref{le:bend-nats-k}. Now we claim \begin{align*}
 \left |   \COL \left( \binom{H}{\numpoints} \right)\right | \leq 
 P(\numpoints, \om^d \cdot k).
\end{align*}

By Lemma~\ref{le:bend-nats-k}, each element $e \in \binom{H}{\numpoints}$ satisfies a \crule of $\binom{\om^d\cdot k}{\numpoints}$. 
Then for any arbitrary edge $e$, let $e$ satisfy $\tau_i$ of size $p_i \leq \numpoints \cdot d$. 
Then take the $p_i$ unique values in $e$, and if necessary, insert any new larger nonnegative naturals from 
$G$ to form a set of $\numpoints \cdot d$ values; denote this by $\Commonconst \in \binom{G}{\numpoints \cdot d}$.
Since $\COL'(\Commonconst) = \Rareconst,$ by the definition of $\COL'$, $\COL(e) \in \Rareconst$. 
Because $|\Rareconst| = P(\numpoints,\om^d \cdot k)$, $T(\numpoints,\om^d \cdot k) \leq P(\numpoints,\om^d \cdot k)$.
\end{proof}

\subsection{\texorpdfstring{$T(\numpoints,\omega^d \cdot k) \geq P(\numpoints, \om^d \cdot k)$}{T(\numpoints, omega d k) >= P(\numpoints, omega d k)}}

\begin{theorem}\label{th:wdk-GT}
For $\numpoints,d,k \in \N$, $T\left(\numpoints,\om^d \cdot k\right) \geq P\left(\numpoints,\om^d \cdot k\right)$.
Therefore, by Theorem~\ref{th:wdk-LT}, \linebreak $T(\numpoints,\om^d \cdot k) = P(\numpoints,\om^d \cdot k)$.
\end{theorem}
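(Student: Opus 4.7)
The proof will closely follow Theorem~\ref{th:wd-GT}, with the added complication of handling the $\rareconst_{\inumpoints}$ values. The plan is as follows.

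First, enumerate the \crules of $\binom{\om^d \cdot k}{\numpoints}$ as $\tau_1, \ldots, \tau_{P(\numpoints, \om^d \cdot k)}$ and define $\COL \colon \binom{\om^d \cdot k}{\numpoints} \to [P(\numpoints, \om^d \cdot k)]$ by $\COL(e) = i$ if $e$ satisfies $\tau_i$, and $\COL(e) = 1$ otherwise. The disjointness argument from Theorem~\ref{th:wd-GT} carries over verbatim: if an edge satisfied two \crulesnospace, then both would induce the same $\assny$ (read off from the edge's $\rareconst_{\inumpoints}$ values) and the same $\xleq$ (read off from the ordering of the edge's $\commonconst_{\inumpoints,\idegr}$ values), hence be equivalent.

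Next, fix an arbitrary $H \oequiv \om^d \cdot k$ and a \crule $\tau = (\assny, \xleq)$; I would produce some $e \in \binom{H}{\numpoints}$ satisfying $\tau$. To set up, partition $H$ by leading coefficient: let $\tilde{H}_b = \{x \in H : \om^d \cdot b \leq x < \om^d \cdot (b+1)\}$ for $0 \leq b < k$, so that $H = \tilde{H}_0 \setconcat \tilde{H}_1 \setconcat \cdots \setconcat \tilde{H}_{k-1}$ as ordered sets, and each $\tilde{H}_b$ embeds into the $b$-th $\om^d$-layer of $\om^d \cdot k$. A short ordinal arithmetic argument (the sum of the order types of the $\tilde{H}_b$'s must equal $\om^d \cdot k$, with each summand at most $\om^d$; absorption would otherwise make the sum strictly less than $\om^d \cdot k$) forces $\tilde{H}_b \oequiv \om^d$ for every $b$.

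Then, for each $\inumpoints \in [\numpoints]$, the desired element at index $\inumpoints$ must have $\rareconst_{\inumpoints} = \assny(\inumpoints)$ and hence must be drawn from $\tilde{H}_{\assny(\inumpoints)}$. By criterion~\ref{cr:y-distinct} of Definition~\ref{def:coloring-rule}, each $\xleq$-equivalence class of coefficient variables involves only indices $\inumpoints$ sharing a common $\assny(\inumpoints)$, so equivalence classes never cross $b$-layers. Next, process the equivalence classes of $\xleq$ from leftmost to rightmost, mirroring the assignment procedure of Theorem~\ref{th:wd-GT} but restricted to the appropriate $\tilde{H}_{b^*} \oequiv \om^d$ for each class. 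Maintain, for each index $\inumpoints$, the invariant that after assigning values to every $\commonconst_{\inumpoints, \idegr'}$ with $\idegr' > \idegr$, the set of elements in $\tilde{H}_{\assny(\inumpoints)}$ consistent with those assignments is order-equivalent to $\om^{\idegr+1}$. When a new class $\commonconst_{\inumpoints_1, \idegr} = \cdots = \commonconst_{\inumpoints_m, \idegr}$ is processed, pick a fresh $e'$ that is both (i) strictly greater than every previously assigned coefficient value, to respect $\xleq$ across layers, and (ii) inside the common $\om^{\idegr+1}$-structure shared by $\inumpoints_1, \ldots, \inumpoints_m$ within $\tilde{H}_{b^*}$; such $e'$ exists because that structure is unbounded.

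The main obstacle is verifying this per-index invariant in the multi-layer setting: one must check that assigning a common coefficient value to several indices simultaneously within a single $\tilde{H}_{b^*}$ does preserve each individual element's $\om^{\idegr+1}$-to-$\om^\idegr$ transition. This relies on criterion~\ref{cr:high-split} (elements that differ in any higher-exponent coefficient occupy disjoint sub-branches of $\tilde{H}_{b^*}$) together with the inductive layering of $\om^d$ as in Theorem~\ref{th:wd-GT}. Once this is done, the constructed $e$ satisfies $\tau$, so $|\COL(\binom{H}{\numpoints})| \geq P(\numpoints, \om^d \cdot k)$ for every $H \oequiv \om^d \cdot k$, which combined with Theorem~\ref{th:wdk-LT} gives the theorem.
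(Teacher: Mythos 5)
Your proposal is correct and follows essentially the same route as the paper: the same satisfaction-based coloring with the same disjointness argument, the same decomposition of $H$ into $k$ layers by leading coefficient, and the same left-to-right processing of equivalence classes (kept within a single layer by criterion~\ref{cr:y-distinct}) with a running lower bound across layers. Your treatment of why each layer is order-equivalent to $\om^d$ is in fact more explicit than the paper's.
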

\begin{proof}
If $P(\numpoints,\om^d \cdot k) = 0$, then this is satisfied vacuously because $T(\numpoints, \om^d \cdot k) \geq 0$. Suppose $P(\numpoints,\om^d \cdot k) \geq 1$. Note that all \crules of $\binom{\om^d \cdot k}{\numpoints}$ are disjoint from each other. That is, for any edge $e \in \binom{\om^d \cdot k}{\numpoints}$, if $e$ satisfies $\tau'$, then it does not satisfy any nonequivalent \crule of $\binom{\om^d \cdot k}{\numpoints}$. This is because if $e$ were to satisfy two \crules $\tau_1$ and $\tau_2$, then $\tau_1$ and $\tau_2$ must share the same $\rareconst_{\inumpoints}$ values, equivalence classes, and order, so the \crules must be equivalent. Therefore, we can index them $\tau_0, \tau_1, \ldots, \tau_{P(\numpoints, \om^d)-1}$ and construct a coloring $\COL \colon \binom{\om^d \cdot k}{\numpoints} \to [P(\numpoints, \om^d\cdot k)]$ with \begin{align*}
    \COL(e) = \begin{cases}
    i & e \textnormal{ satisfies } \tau_i\\
    0 & \textnormal{otherwise}.
    \end{cases}
\end{align*}
Similar to Theorem~\ref{th:om2}, our coloring has two ways to output color 0, both 
through the satisfaction of $\tau_0$ and through the catch-all case. The part that forces 
color 0 to be present in all order-equivalent subsets is the satisfaction of $\tau_0$.
For arbitrary $H \oequiv \om^{d}\cdot k$ and $\tau$, we find variables $\rareconst_{\inumpoints}$ and $e_{\inumpoints,\idegr}$ such that
\begin{align*}
    \{\om^d \rareconst_1 + \om^{d-1} e_{1,d-1} + \cdots + \om^{1} e_{1,1} + e_{1,0}, \ldots,
    \om^d \rareconst_{\numpoints} + \om^{d-1} e_{\numpoints,d-1} + \cdots + \om e_{\numpoints,1} + e_{\numpoints,0}\}
\end{align*}
satisfies $\tau$.

For any $H \oequiv \om^{d}\cdot k$ and $\tau$, we first separate $H$ 
into $k$ ordered sets by the leading coefficient, each order-equivalent to $\om^d$.

Then, if there are equivalence classes in $\tau$, using the process formally described in the proof of Theorem~\ref{th:wd-GT},
we consider the leading equivalence class of $\tau$. By criterion~\ref{cr:y-distinct} 
of Definition~\ref{def:coloring-rule}, all variables in that equivalence class 
must come from the same set order-equivalent to $\om^d$. We assign a finite value to 
that equivalence class, and move to the next class
with a potentially different $\rareconst$ value
using the assigned finite value as a lower bound for the next one. We can repeat this process to find $e_{\inumpoints,\idegr}$ that satisfy every \crule of $\binom{\om^d \cdot k}{\numpoints}$ for arbitrary $H \oequiv \om^{d}$. Then, we can assign the $\rareconst_{\inumpoints}$ directly according to $\tau$.

If there are no equivalence classes in $\tau$ (it has size $p=0$), we can simply assign the variables $\rareconst_{\inumpoints}$ directly according to $\tau$.

Therefore for all $H \oequiv \omega^d$,
$\left|\COL\left(\binom{H}{\numpoints}\right)\right|\geq P(\numpoints,\om^d\cdot k)$. Hence $T(\numpoints,\om^d\cdot k) \geq P\left(\numpoints, \om^d\cdot k\right)$. Therefore by Theorem~\ref{th:wdk-LT}, $T(\numpoints,\om^d \cdot k) = P(\numpoints,\om^d \cdot k)$.
\end{proof}

\section{Big Ramsey Degrees of Ordinals less than \texorpdfstring{$\om^\om$}{omega omega}}\label{se:done}

\subsection{\capgencrules}
We defined \crules (\crulesLong) to compute big Ramsey degrees of ordinals of the form $\om^d \cdot k$. We
now extend the definition to \textit{\gencrulesnospace} (\gencrulesLong), which allows us to compute big Ramsey degrees for all ordinals less than $\om^\om$.

\begin{definition}
\label{def:SC} We now define \gencrules rigorously. Much like the definition of \crulesnospace, we impose a structure on edges, and then list criteria that \gencrules must satisfy.
Consider some ordinal $\alpha < \om^\om$:
$$\alpha \oequiv \om^d \cdot k_d + \om^{d-1}\cdot k_{d-1} + \cdots + \om \cdot k_1 + k_0.$$
Then $\alpha$ is the addition of $d+1$ ordinals each of the form $\om^\inumpoints\cdot k_\inumpoints$. 
For any element $\beta$ of $\alpha$, there is some $\inumpoints$ such that $$
\beta \in  \om^d \cdot k_d + \cdots + \om^{\inumpoints+1} \cdot k_{\inumpoints+1} + \om^{\inumpoints} \cdot k_{\inumpoints} \text{ and } \beta \not \in \om^d \cdot k_d + \cdots + \om^{\inumpoints+1} \cdot k_{\inumpoints+1}.
$$
In this case, we write that
$\beta$ {\it originated} from the $\om^{\inumpoints} \cdot k_{\inumpoints}$ part of $\alpha$. Note that the set of ordinals $\beta \in \alpha$ that originated from the $\om^{\inumpoints} \cdot k_{\inumpoints}$ part of $\alpha$ is order-equivalent to $\om^{\inumpoints} \cdot k_{\inumpoints}.$

For a natural number
$\numpoints\geq 0$, there are $\numpoints$ elements, denoted $p_\inumpoints \in \alpha,$
in any
$e \in \binom{\alpha}{\numpoints}$. Unlike in the definition of \crulesnospace, we will need to allow each $p_i$ to
have anywhere from 0 to $d$ variables $\commonconst_{\inumpoints,\idegr}$, depending on which of the $d+1$ ordered sets the element originated from. We use
$0 \leq c_{\inumpoints} \leq d$ for the 
number of such variables that the element $p_\inumpoints$ has.
Elements $p_\inumpoints \in \alpha$ with $c_{\inumpoints}$ variables
originated from the $\om^{c_{\inumpoints}} \cdot k_{c_{\inumpoints}}$ part of $\alpha$. We denote each element of $e$ as 
$$
\om^d \cdot k_d + \cdots + \om^{c_{\inumpoints}+1} \cdot k_{c_{\inumpoints}+1} + \om^{c_{\inumpoints}} \cdot \rareconst_{\inumpoints} + \om^{c_{\inumpoints}-1} \cdot \commonconst_{\inumpoints,c_{\inumpoints}-1} + \om^{c_{\inumpoints}-2} \cdot \commonconst_{\inumpoints,c_{\inumpoints}-2} + \cdots + \om^1 \cdot \commonconst_{\inumpoints,1} + \commonconst_{\inumpoints,0}.$$ where $0 \leq \rareconst_{\inumpoints} < k_{c_{\inumpoints}}$ and $0\leq \commonconst_{\inumpoints,\idegr}$. Note that for all elements originating from the $\om^{c_{\inumpoints}} \cdot k_{c_{\inumpoints}}$ part of $\alpha,$ the leading terms $$\om^d \cdot k_d + \cdots + \om^{c_{\inumpoints}+1} \cdot k_{c_{\inumpoints}+1}$$ are always the same; only the variables $c_{\inumpoints}, \rareconst_{\inumpoints},$ and $\commonconst_{\inumpoints,\idegr}$ are needed to uniquely identify an element.

A \textit{\gencruleLong}, hereafter referred to as a \gencrulenospace, is a triple $(\assnc,\assny, \xleq)$ of constraints on the $c_{\inumpoints}, \rareconst_{\inumpoints},$ and $\commonconst_{\inumpoints,\idegr}$.

The function $\assnc: [\numpoints] \to \{0,\dots,d\}$ maps indices $\inumpoints$ to their corresponding values $c_{\inumpoints}$. The function $\assny$ maps $\assny: [\numpoints] \to \omega$ such that $\assny(\inumpoints) < k_{\assnc(\inumpoints)},$ mapping from indices $\inumpoints$ to the values $\rareconst_{\inumpoints}$. The order $\xleq$ is a total preorder on the indices of the $\commonconst_{\inumpoints,\idegr}$. We continue to use the same notation to represent the preorder.

\gencrules must fulfill the following criteria (only criterion~\ref{cr:d-c-split} below is different from its corresponding criterion in Definition~\ref{def:coloring-rule}, the definition for \crulesnospace):
\begin{enumerate}
    \item \label{cr:d-index-ordering} If $d \geq 1$, then for all $\inumpoints < \inumpoints',$ $\commonconst_{\inumpoints, 0} < \commonconst_{\inumpoints', 0}.$ Otherwise when $d = 0$, $\rareconst_{\inumpoints} < \rareconst_{\inumpoints'}$ for all $\inumpoints<\inumpoints'$. (The element indices are ordered by their lowest-exponent variable.)
    \item \label{cr:d-y-distinct} $\commonconst_{\inumpoints,\idegr} = \commonconst_{\inumpoints',\idegr} \implies \rareconst_{\inumpoints} = \rareconst_{\inumpoints'}$ for all $\idegr$. (Elements with any $\commonconst$ values that are the same must have the same $\rareconst$ value.)
    \item \label{cr:d-high-first} For all $\idegr > \idegr',$ $\commonconst_{\inumpoints ,\idegr} < \commonconst_{\inumpoints, \idegr'}.$ (The high-exponent variables of each element are strictly less than the low-exponent variables.)
    
    \item \label{cr:d-term-dim-diff} $\commonconst_{\inumpoints, \idegr} = \commonconst_{\inumpoints' ,\idegr'} \implies \idegr = \idegr'$. (Only variables with the same exponent can be equal.)
    \item \label{cr:d-high-split} $\commonconst_{\inumpoints, \idegr} \neq \commonconst_{\inumpoints' ,\idegr} \implies \commonconst_{\inumpoints,\idegr-1} \neq \commonconst_{\inumpoints',\idegr-1}$ for all $\idegr>0$. (Elements that differ in a high-exponent variable differ in all lower-exponent variables.)
    \item \label{cr:d-c-split} $c_{\inumpoints} \neq c_{\inumpoints'} \implies (\rareconst_{\inumpoints} \neq \rareconst_{\inumpoints'}$ and $\commonconst_{\inumpoints ,\idegr} \neq \commonconst_{\inumpoints', \idegr})$ for all $0 \leq \idegr < d$. (Different $c$ variables mean different $\rareconst$ and $\commonconst$ variables.)
\end{enumerate}
\end{definition}

\begin{definition}
    Two \gencrules are \textit{equivalent} whenever their constraints $\assnc, \assny, \xleq$ are
    exactly equal.
\end{definition}

\begin{definition}
We again define the \textit{size} of a \gencrule to be how many equivalence classes its $\commonconst$ variables form. A \gencrulenospace's size $p$ is still bounded above by $d \cdot \numpoints$.
\end{definition}

\begin{definition}~
\begin{enumerate}
\item 
$S_p(\numpoints,\alpha)$ is the number of \gencrules of size $p$ there are for $\binom{\alpha}{\numpoints}$. 
\item 
$S(\numpoints,\alpha)$ is the total number of \gencrules there are for $\binom{\alpha}{\numpoints}$ regardless of size. It can be calculated as $$
\sum_{p=0}^{\numpoints \cdot d} S_p(\numpoints, \alpha).
$$ 
\end{enumerate}
\end{definition}
We will show $T(\numpoints,\alpha) = S(\numpoints,\alpha)$.

\begin{definition}\label{def:gcrule-coef}
For an edge $$e=\{\om^{c_{\inumpoints}} \cdot \rareconst_{\inumpoints} + \om^{c_{\inumpoints}-1} \cdot \commonconst_{\inumpoints,c_{\inumpoints}-1} + \om^{c_{\inumpoints}-2} \cdot \commonconst_{\inumpoints,c_{\inumpoints}-2} + \cdots + \om^1 \cdot \commonconst_{\inumpoints,1} + \commonconst_{\inumpoints,0} \colon 1 \leq \inumpoints \leq \numpoints\},$$
$e$ \textit{satisfies} the \gencrule $(\assnc,\assny, \xleq)$ if $c_{\inumpoints} = \assnc(\inumpoints)$ and $\rareconst_{\inumpoints} = \assny(\inumpoints)$ for every $1 \leq \inumpoints\leq \numpoints$, and if $(\inumpoints,\idegr) \xleq (\inumpoints',\idegr') \iff \commonconst_{\inumpoints,\idegr}\leq \commonconst_{\inumpoints',\idegr'}$ for all $1 \leq \inumpoints,\inumpoints'\leq \numpoints$, $0 \leq \idegr < c_{\inumpoints},$ and $0 \leq \idegr' < c_{\inumpoints'}.$
Again, the coefficients of an edge satisfying some \gencrule is the set of its
$a_{i,j}$ terms.
\end{definition}

\begin{lemma}
\label{le:D-is-P}
For $\numpoints,d,k,p \in \N$, $S_p(\numpoints, \om^d \cdot k) = P_p(\numpoints, \om^d \cdot k)$.
\end{lemma}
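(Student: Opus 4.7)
The plan is to establish a size-preserving bijection between \gencrules for $\binom{\om^d \cdot k}{\numpoints}$ and \crules for $\binom{\om^d \cdot k}{\numpoints}$. Since $\om^d \cdot k \oequiv \om^d \cdot k_d + \om^{d-1} \cdot k_{d-1} + \cdots + k_0$ with $k_d = k$ and $k_i = 0$ for all $i < d$, the only nonempty summand is $\om^d \cdot k_d$. Thus every element of $\om^d \cdot k$ originates from the $\om^d \cdot k_d$ part, so in any \gencrule $(\assnc,\assny,\xleq)$ we are forced to have $\assnc(\inumpoints)= d$ for every $1 \leq \inumpoints \leq \numpoints$; there is no real choice in $\assnc$.

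Under this forced $\assnc \equiv d$, each element of an edge has exactly $d$ variables $\commonconst_{\inumpoints,\idegr}$ with $0 \leq \idegr < d$, and the constraint $\assny(\inumpoints) < k_{\assnc(\inumpoints)} = k_d = k$ in Definition~\ref{def:SC} becomes the CR constraint $\assny(\inumpoints) < k$ in Definition~\ref{def:coloring-rule}. The preorder $\xleq$ in the \gencrule is then a total preorder on the same index set $\{(\inumpoints,\idegr) : 1 \leq \inumpoints \leq \numpoints,\ 0 \leq \idegr < d\}$ as in the \crule. Thus the map $(\assnc,\assny,\xleq) \mapsto (\assny,\xleq)$ is a bijection between assignments of the relevant syntactic data for \gencrules and those for \crules.

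Next I would verify that this bijection matches the criteria of the two definitions one by one. Criteria~\ref{cr:d-index-ordering}--\ref{cr:d-high-split} of Definition~\ref{def:SC} are identical to criteria~\ref{cr:index-ordering}--\ref{cr:high-split} of Definition~\ref{def:coloring-rule} once we drop the $c_{\inumpoints}$. Criterion~\ref{cr:d-c-split}, which says $c_{\inumpoints}\neq c_{\inumpoints'}$ implies distinct $\rareconst$ and $\commonconst$ values, becomes vacuous here because every $c_{\inumpoints}$ equals $d$. Hence $(\assnc,\assny,\xleq)$ satisfies the \gencrule criteria if and only if $(\assny,\xleq)$ satisfies the \crule criteria. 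The bijection is size-preserving since the size is defined in both cases as the number of equivalence classes of the $\commonconst_{\inumpoints,\idegr}$ under the (identical) preorder $\xleq$.

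The proof has no real obstacle beyond bookkeeping; the only point worth being careful about is making sure criterion~\ref{cr:d-c-split} genuinely drops out (which it does, vacuously), and that the constraint $\assny(\inumpoints) < k_{c_{\inumpoints}}$ lines up with $\assny(\inumpoints) < k$ (which it does because $c_{\inumpoints}=d$ and $k_d = k$). Combining these observations yields $S_p(\numpoints,\om^d\cdot k) = P_p(\numpoints,\om^d\cdot k)$ for every $p$.
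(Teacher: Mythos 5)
Your proposal is correct and follows essentially the same route as the paper: force $c_{\inumpoints}=d$ for every $\inumpoints$ (because $k_{\idegr}=0$ for $\idegr<d$, equivalently because $\rareconst_{\inumpoints}<k_{c_{\inumpoints}}$ would be impossible otherwise), observe that criterion~\ref{cr:d-c-split} is then vacuous and the remaining criteria coincide, and conclude the counts agree size by size. No gaps; this matches the paper's argument with only slightly more explicit bookkeeping.
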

\begin{proof}
Let $\alpha = \om^d \cdot k$. Suppose some $c_{\inumpoints} \neq d$. Then $\rareconst_{\inumpoints} < k_{c_{\inumpoints}}$ by Definition~\ref{def:SC}, so $\rareconst_{\inumpoints} < 0$, which is impossible. Thus $c_{\inumpoints} = d$. Then there are the same count of $\numpoints \cdot d$ variables  $\commonconst_{\inumpoints,\idegr}$ being permuted, the new criterion~\ref{cr:d-c-split} has no effect because all $c_{\inumpoints}$ are equal. Hence both are under the same restrictions so $S_p(\numpoints, \om^d \cdot k) = P_p(\numpoints, \om^d \cdot k)$.
\end{proof}

\begin{lemma}
    \label{le:D-recur}
    For all $\alpha < \om^\om$ with \begin{align*}	\alpha \oequiv \om^d \cdot k_d + \om^{d-1} \cdot k_{d-1} + \cdots + \om \cdot k_1 + k_0 \text{ with } k_d \neq 0 \text{ and } d > 0,~~~~~~~~~\\
        S_p\left(\numpoints, \alpha\right) = \sum_{j=0}^{\numpoints} \sum_{i=0}^p \binom{p}{i} P_i\left(j, \om^d \cdot k_d \right) S_{p-i}\left(\numpoints-j, \om^{d-1} \cdot k_{d-1} + \cdots + \om \cdot k_1 + k_0\right).
     \end{align*}
     When the conditions $k_d \neq 0$ and $d > 0$ cannot be satisfied, then $\alpha < \om$ and $S_p(\numpoints, k_0) = P_p(\numpoints, k_0)$.
\end{lemma}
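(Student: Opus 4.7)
The plan is to prove the recurrence by exhibiting a bijection between general coloring rules of size $p$ on $\binom{\alpha}{\numpoints}$ and tuples $(j, i, \tau_1, \tau_2, \sigma)$, where $j \in \{0, \ldots, \numpoints\}$, $i \in \{0, \ldots, p\}$, $\tau_1$ is a coloring rule of size $i$ on $\binom{\om^d \cdot k_d}{j}$, $\tau_2$ is a general coloring rule of size $p-i$ on $\binom{\om^{d-1}\cdot k_{d-1} + \cdots + k_0}{\numpoints-j}$, and $\sigma$ is an interleaving of the $i$ and $p-i$ equivalence classes of $\tau_1$ and $\tau_2$ (of which there are $\binom{p}{i}$). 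Summing over the permissible $j$ and $i$ then yields the claimed recurrence.

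Given a \gencrule $(\assnc, \assny, \xleq)$ on $\binom{\alpha}{\numpoints}$, I would let $j$ be the number of indices $\inumpoints$ with $\assnc(\inumpoints) = d$; these are precisely the elements originating from the leading $\om^d \cdot k_d$ summand, which are the smallest elements of $\alpha$ in its natural order. Thus these elements occupy indices $1, \ldots, j$, with the remaining $\numpoints - j$ elements at indices $j+1, \ldots, \numpoints$. Restricting $(\assnc, \assny, \xleq)$ to the first $j$ indices gives a structure on $\binom{\om^d \cdot k_d}{j}$ in which every $c$-value equals $d$; by Lemma~\ref{le:D-is-P} this is the same as a \crule $\tau_1$, of some size $i$. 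Restricting to the remaining indices (and shifting them back to $1, \ldots, \numpoints-j$) yields a \gencrule $\tau_2$ on the smaller ordinal of size $p - i$. Criterion~\ref{cr:d-c-split} of Definition~\ref{def:SC} guarantees that no $\commonconst$ variable of $\tau_1$ can be $\xleq$-equal to any $\commonconst$ variable of $\tau_2$, so the combined preorder on all $p$ equivalence classes is recovered exactly from $\tau_1$, $\tau_2$, and an interleaving $\sigma$ of their equivalence classes.

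The inverse map reassembles $(j, i, \tau_1, \tau_2, \sigma)$ by shifting $\tau_2$'s indices up by $j$, setting $c_\inumpoints = d$ for $\inumpoints \leq j$ and taking the $c$-values from $\tau_2$ for the rest, and placing the $\commonconst$ variables in the preorder determined by $\sigma$. One then checks the six criteria of Definition~\ref{def:SC}: criterion~\ref{cr:d-index-ordering} holds because leading-part indices precede the rest in $\alpha$'s natural order; criteria \ref{cr:d-y-distinct}, \ref{cr:d-high-first}, \ref{cr:d-term-dim-diff}, and \ref{cr:d-high-split} are inherited separately from $\tau_1$ and $\tau_2$ since cross-part equalities among $\rareconst$ or $\commonconst$ variables are ruled out by construction; and criterion~\ref{cr:d-c-split} is automatic. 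The main obstacle will be the proper interpretation of criterion~\ref{cr:d-index-ordering} when some $c_\inumpoints = 0$ and thus $\commonconst_{\inumpoints,0}$ does not exist: I would read it as imposing the natural order of $\alpha$, namely ordering indices first by decreasing $c_\inumpoints$ and then using the criterion as stated within each $c$-block. The degenerate case $\alpha < \om$ is immediate from Lemma~\ref{le:D-is-P}, as there are no $\commonconst$ variables at all and the \gencrule reduces to choosing an assignment of $\assny$.
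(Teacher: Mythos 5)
Your decomposition is essentially the paper's: split a \gencrule of $\binom{\alpha}{\numpoints}$ of size $p$ into a \crule $\tau_1$ on the $j$ elements originating from the $\om^d\cdot k_d$ part, a \gencrule $\tau_2$ on the remaining $\numpoints-j$ elements, and one of $\binom{p}{i}$ interleavings of their equivalence classes, with criterion~\ref{cr:d-c-split} ruling out cross-equalities between the two pieces and Lemma~\ref{le:D-is-P} handling both the identification of $\tau_1$ with a \crule and the degenerate case $\alpha<\om$. That is the same bijection the paper uses.

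However, one step as you state it is wrong. You claim the elements with $\assnc(\inumpoints)=d$ ``occupy indices $1,\ldots,j$'' because they are the smallest elements of $\alpha$ in its natural order, and you verify criterion~\ref{cr:d-index-ordering} in the inverse direction on the same grounds. But criterion~\ref{cr:d-index-ordering} orders indices by the preorder position of the lowest-exponent variables $\commonconst_{\inumpoints,0}$, not by the natural order of the elements in $\alpha$. For instance, in $\alpha=\om^2+\om$ consider an edge whose $\om^2$-part element is $\om\cdot 5+10$ and whose $\om$-part element is $\om^2+3$: the former carries $\commonconst_{\cdot,0}=10$ and the latter carries $\commonconst_{\cdot,0}=3$, so the $\om$-part element receives index $1$. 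The leading-part elements therefore need not form an initial segment of the indices, and ``restricting to the first $j$ indices'' does not in general produce $\tau_1$. The repair is exactly what the paper does: extract $\tau_1$ as the restriction to the set of indices with $\assnc(\inumpoints)=d$ wherever they sit, re-index each piece to restore criterion~\ref{cr:d-index-ordering}, and in the inverse direction re-index the combined rule after interleaving; since re-indexing is a canonical relabeling it does not affect the count, so the recurrence survives the fix. Your fallback reading of criterion~\ref{cr:d-index-ordering} (order indices by decreasing $c_{\inumpoints}$ first) is not the paper's convention and contradicts the criterion as written whenever two elements from different parts both possess a variable $\commonconst_{\inumpoints,0}$.
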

\begin{proof}
When $k_0$ is the only nonzero $k$ term, Lemma~\ref{le:D-is-P} shows $S_p(\numpoints, \alpha) = P_p(\numpoints, \alpha)$. When $d \geq 1$, we describe a process of combining \crules with \gencrules to create \gencrules for $\binom{\alpha}{\numpoints}$.

Let $\numpoints \geq 0, p \geq 0$ be naturals. For some $\alpha < \om^\om$, let $0 \leq m \leq \numpoints$ and $0 \leq s \leq p$ be naturals. We create $$\binom{p}{s} P_s \left( m,\om^{d} \cdot k_d \right) S_{p-s} \left( \numpoints-m,\om^{d-1} \cdot k_{d-1} + \cdots + k_0\right)$$ \gencrulesnospace, with each \gencrule having $m$ elements from the $\om^d  \cdot k_d$ part of $\alpha$ and $\numpoints-m$ elements from parts with lower exponents.

Let $\tau_1$ represent one of the $P_s\left(m, \om^{d} \cdot k_d\right)$ \crules of $\binom{\om^{d}\cdot k_d}{m}$ of size $s$, and $\tau_2$ represent one of the $S_{p-s}\left(\numpoints-m, \om^{d-1} \cdot k_{d-1} + \cdots + k_0\right)$ \gencrules of $\binom{\om^{d-1} \cdot k_{d-1} + \cdots + k_0}{\numpoints-m}$ of size $p-s$. We change $\tau_1$ into a \gencrule by assigning it $c_{\inumpoints} = d$ for all $c_{\inumpoints}$.

Then we can combine each $\tau_1$ and $\tau_2$ to form $\binom{p}{s}$ unique new \gencrules of size $p$: Change the indices $\tau_2$ and permute the equivalence classes as in the proof of Lemma~\ref{le:P-recur}. Note that we do not insert a leading equivalence class -- this is because we do not need to increase the exponent or size of $\tau_1$.

We can keep each $c_{\inumpoints}$ and $\rareconst_{\inumpoints}$ value the same, and change their indices alongside the $\commonconst_{\inumpoints,\idegr}$ variables to ensure criterion~\ref{cr:d-index-ordering}.

Each \gencrule is unique by the $\tau_1$ and $\tau_2$ used to create it because the process is invertible: we can identify the elements originally from $\tau_1$ because they uniquely have $c_{\inumpoints}=d$.

We claim each \gencrule created by this process has the properties described by Definition~\ref{def:SC}: Because all $c_{\inumpoints}$ are equal for $\tau_1$, criterion~\ref{cr:d-c-split} is satisfied for the elements from $\tau_1$.
Criterion~\ref{cr:index-ordering} is satisfied by changing the indices of the variables.
The remaining criteria are satisfied because $\tau_1$ and $\tau_2$ satisfied them and their internal orders and equivalence classes were preserved in permuting the equivalence classes. Therefore this process does not overcount \gencrulesnospace.

We also claim that every \gencrule of $\binom{\alpha}{\numpoints}$ is counted by this process: each can be mapped to some $\tau_1$ and $\tau_2$ that create it by a similar argument to proving that the process creates unique \gencrules 2 paragraphs above.
\end{proof}

\subsection{\texorpdfstring{$T(\numpoints,\alpha)\leq S(\numpoints,\alpha)$}{T(\numpoints, alpha) <= S(\numpoints, alpha)}}
\begin{lemma}
\label{le:bend-nats-alph}
For all $\alpha < \om^\om$, $\numpoints \in \N$, and ordered subsets
$G \subseteq \om$ with $G \oequiv \om$, there exists some $H \subseteq \alpha$, $H \oequiv \alpha$ where for all $e \in \binom{H}{\numpoints}$, $e$ satisfies a \gencrule of $\binom{\alpha}{\numpoints}$ and each coefficient of $e$ (see Definition~\ref{def:gcrule-coef}) is contained in $G$.
\end{lemma}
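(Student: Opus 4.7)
The plan is to reduce directly to Lemma~\ref{le:bend-nats-k} by decomposing $\alpha$ along its Cantor normal form and splitting $G$ into one infinite piece for each component. Write
$$\alpha \oequiv \om^d \cdot k_d + \om^{d-1} \cdot k_{d-1} + \cdots + \om \cdot k_1 + k_0,$$
and partition $G$ into pairwise disjoint infinite subsets $G_d, G_{d-1}, \ldots, G_0$, each order-equivalent to $\om$ (for example, via a bijection $G \to \om \times \{0,1,\ldots,d\}$). This gives a distinct pool of coefficients to use for each block $\om^i \cdot k_i$ of $\alpha$.

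For each $i \geq 1$ with $k_i \geq 1$, apply Lemma~\ref{le:bend-nats-k} to $\om^i \cdot k_i$ with the copy of $\om$ supplied by $G_i$; this produces $H_i \subseteq \om^i \cdot k_i$ with $H_i \oequiv \om^i \cdot k_i$ such that every edge of $\binom{H_i}{m}$ satisfies a \crule of $\binom{\om^i \cdot k_i}{m}$ whose coefficients lie in $G_i$. If $k_0 \geq 1$, set $H_0$ equal to the entire $k_0$-element constant block (no $\commonconst$ variables occur at that level). Define $H = H_d \setconcat H_{d-1} \setconcat \cdots \setconcat H_0 \subseteq \alpha$, so that $H \oequiv \alpha$ and every coefficient appearing in $H$ lies in $G$.

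To verify the \gencrule property for $e \in \binom{H}{\numpoints}$, partition $e$ as $e = \bigsqcup_i e^{(i)}$ where $e^{(i)} = e \cap H_i$, and assign $c_{\inumpoints} = i$ to every element of $e^{(i)}$. For each nonempty $e^{(i)}$, the restriction satisfies a \crule of $\binom{\om^i \cdot k_i}{|e^{(i)}|}$, which supplies criteria~\ref{cr:d-index-ordering}--\ref{cr:d-high-split} of Definition~\ref{def:SC} on the indices with $c_{\inumpoints} = i$. Criterion~\ref{cr:d-c-split} across blocks is handled by the disjointness $G_i \cap G_{i'} = \emptyset$: two elements from different blocks draw their $\commonconst$ coefficients from disjoint pools, so no cross-block $\commonconst$ coincidences are possible, and the distinct positions of the blocks in the Cantor normal form of $\alpha$ separate the corresponding $\rareconst$ labels as well.

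The main subtle point will be to glue the per-block preorders into a single total preorder $\xleq$ on all $\commonconst$ indices and to check that criterion~\ref{cr:d-index-ordering} (elements ordered by their lowest-exponent coefficient) still holds after concatenation. Both follow because the blocks are concatenated in decreasing order of $i$, which matches the natural ordering of $\alpha$: relabeling indices block by block in the order $H_d, H_{d-1}, \ldots, H_0$ produces a preorder consistent with the actual numerical values of the coefficients, with every cross-block pairwise comparison being a strict inequality. Thus the combined structure on $e$ is a valid \gencrule of $\binom{\alpha}{\numpoints}$ with coefficients in $G$, completing the proof.
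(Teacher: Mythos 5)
Your proposal is correct and follows essentially the same route as the paper: decompose $\alpha$ into its Cantor normal form blocks, split $G$ into $d+1$ pairwise disjoint infinite coefficient pools (the paper obtains these by an auxiliary application of Lemma~\ref{le:bend-nats-k} to $\om\cdot(d+1)$, you partition $G$ directly), apply Lemma~\ref{le:bend-nats-k} blockwise, concatenate, and observe that only criterion~\ref{cr:d-c-split} needs checking across blocks, which the disjointness of the pools provides. Your additional remarks on gluing the per-block preorders and rechecking criterion~\ref{cr:d-index-ordering} are details the paper leaves implicit, but they do not change the argument.
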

\begin{proof}
Because $G \oequiv \om$, $G = \{\commonconst_0 < \commonconst_1 < \commonconst_2 <
\cdots\}$.
Let $\alpha \oequiv \om^d \cdot k_d + \om^{d-1} \cdot k_{d-1} + \cdots + \om \cdot k_1 + k_0$.

Partition $G$ into $d+1$ distinct copies: \begin{align*}
    G_0 &= \{a_i : i \equiv 0 \mod d+1\}\\
    G_1 &= \{a_i : i \equiv 1 \mod d+1\}\\
    &\vdots\\
    G_{d} &= \{a_i : i \equiv d \mod d+1\}.
\end{align*}
For $0 \leq \idegr \leq d$, apply Lemma~\ref{le:bend-nats-k} on $G_\idegr$ to yield some $H_\idegr \oequiv \om^{\idegr} \cdot k_{\idegr}$ where all $e \in \binom{H_{\idegr}}{n}$ satisfy a \crule for $\binom{\om^{\idegr} \cdot k_{\idegr}}{\numpoints}$.
We then sum the remaining portion of $\alpha$ with each $H_j$, aligning with the order of $\alpha$:
$$
H'_j = \{\om^d \cdot k_d + \cdots + \om^{j+1} \cdot k_{j+1} + \beta : \beta \in H_j\} \oequiv H_j.
$$
Then let
$$H = H'_d \cup H'_{d-1} \cup \cdots \cup H'_0,$$
ordered by ordinal comparison. Since every element of $H'_{i+1}$ is less than every element of $H'_i$, $H \oequiv \alpha$.



We claim that all $e \in \binom{H}{n}$ satisfy a \gencrulenospace. Since we separated $G$ into disjoint orders $G_{\idegr}$, each $H'_{\idegr}$ is disjoint from the others.
With each $e' \in \binom{H_j}{n}$
satisfying a \crulenospace of a lesser ordinal by Lemma~\ref{le:bend-nats-k},
this continues to each $e' \in \binom{H'_j}{n}$ with the same $b_i, a_{i,j}$.
Hence
Criteria~\ref{cr:d-index-ordering}, \ref{cr:d-y-distinct}, \ref{cr:d-high-first},
\ref{cr:d-term-dim-diff}, and \ref{cr:d-high-split} of Definition~\ref{def:SC} are automatically satisfied for
$e$: if some
criterion was not satisfied and the $H'_{\idegr}$ remained distinct, that would
mean problematic elements came from the same part, contradicting how each $e' \in \binom{H'_j}{n}$
satisfies a \crulenospace.
Criterion~\ref{cr:d-c-split} is satisfied by each of the
$H'_{\idegr}$ being distinct from each other.
The coefficients of all edges $e$ are contained in $G$ by the construction of $H$ from $G$.
\end{proof}

\begin{theorem}
    \label{th:alph-GT}
    For all $\alpha < \om^\om$,
        $T\left(\numpoints, \alpha\right) \leq S\left(\numpoints, \alpha\right).$
\end{theorem}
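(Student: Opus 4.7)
The plan is to mirror Theorem~\ref{th:wdk-LT}, with \gencrules replacing \crules and Lemma~\ref{le:bend-nats-alph} replacing Lemma~\ref{le:bend-nats-k}. Write $\alpha \oequiv \om^d \cdot k_d + \cdots + \om \cdot k_1 + k_0$ with $k_d \neq 0$; the degenerate case $d = 0$ reduces to $\alpha$ being finite, so the only $H \oequiv \alpha$ is $\alpha$ itself and $T(\numpoints,\alpha) \leq \binom{k_0}{\numpoints} = S(\numpoints,\alpha)$ by Lemma~\ref{le:D-is-P}. Given a coloring $\COL \colon \binom{\alpha}{\numpoints} \to [\numcolors]$, I would enumerate the \gencrules of $\binom{\alpha}{\numpoints}$ as $\tau_1,\ldots,\tau_{S(\numpoints,\alpha)}$, with each $\tau_i = (\assnc_i,\assny_i,\xleq_i)$ of size $p_i \leq \numpoints \cdot d$.

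For each $\tau_i$ I would define $f_i \colon \binom{\om}{\numpoints \cdot d} \to \binom{\alpha}{\numpoints}$ by using the $p_i$ smallest elements of the input to populate the $p_i$ equivalence classes of $\xleq_i$ in increasing order, then assembling the $\inumpoints$th element of the edge as
$$\om^d \cdot k_d + \cdots + \om^{\assnc_i(\inumpoints)+1}\cdot k_{\assnc_i(\inumpoints)+1} + \om^{\assnc_i(\inumpoints)}\cdot \assny_i(\inumpoints) + \sum_{\idegr=0}^{\assnc_i(\inumpoints)-1}\om^{\idegr}\cdot \commonconst_{\inumpoints,\idegr}.$$
By construction $f_i(X)$ satisfies $\tau_i$. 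I would then set $\COL'(X) = (\COL(f_1(X)),\ldots,\COL(f_{S(\numpoints,\alpha)}(X)))$ and invoke Theorem~\ref{th:ramsey} to find $G \oequiv \om$ on which $\COL'$ is constant with value $\Rareconst \in [\numcolors]^{S(\numpoints,\alpha)}$.

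Next I would apply Lemma~\ref{le:bend-nats-alph} to $G$ to obtain $H \oequiv \alpha$ such that every $e \in \binom{H}{\numpoints}$ satisfies some \gencrule $\tau_i$ and has all coefficients in $G$. For such an $e$, take its $p_i$ distinct $\commonconst$ values (all lying in $G$), pad them with larger elements of $G$ to a set $\Commonconst \in \binom{G}{\numpoints \cdot d}$, and note that $f_i(\Commonconst) = e$: the $\assnc_i$ and $\assny_i$ data of $\tau_i$ match the corresponding data of $e$, and the $p_i$ smallest values of $\Commonconst$ are exactly the equivalence-class representatives of $e$. Hence $\COL(e)$ is the $i$th coordinate of $\Rareconst$, so $|\COL(\binom{H}{\numpoints})| \leq S(\numpoints,\alpha)$.

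The main obstacle will be getting $f_i$ right. Unlike the $\om^d \cdot k$ case, different indices $\inumpoints$ may have different $c_\inumpoints = \assnc_i(\inumpoints)$, so the $\numpoints$ elements of the edge carry heterogeneous numbers of $\commonconst$ variables and distinct fixed prefixes $\om^d \cdot k_d + \cdots + \om^{c_\inumpoints+1}\cdot k_{c_\inumpoints+1}$. I would need to verify that the populated $\commonconst_{\inumpoints,\idegr}$ values together with these prefixes and the $\rareconst_\inumpoints$ forced by $\tau_i$ do assemble into a valid edge of $\alpha$ satisfying $\tau_i$; this is guaranteed by criteria~\ref{cr:d-high-first} and~\ref{cr:d-c-split} of Definition~\ref{def:SC}, which together ensure the $\commonconst$-ordering is compatible with the $\assnc_i$ data and that distinct $c_\inumpoints$ automatically force distinct elements. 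Once this bookkeeping is settled, the rest is a direct transcription of Theorem~\ref{th:wdk-LT}.
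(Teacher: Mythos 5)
Your proposal is correct and follows essentially the same route as the paper: enumerate the \gencrulesLong, build one $f_i$ per rule from the $p_i$ least elements of a $(\numpoints\cdot d)$-subset of $\om$, product-color, apply Theorem~\ref{th:ramsey}, and then use Lemma~\ref{le:bend-nats-alph} to get $H\oequiv\alpha$ whose edges all satisfy some \gencrule with coefficients in $G$. Your closing paragraph on the bookkeeping for heterogeneous $c_{\inumpoints}$ values and the fixed prefixes is a detail the paper leaves implicit, and is handled correctly.
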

\begin{proof}
Let $\numcolors \in \N$ and let \begin{align*}
    \COL \colon \binom{\alpha}{\numpoints} \to [\numcolors]
\end{align*}
be a $\numcolors$-coloring of $\binom{\alpha}{\numpoints}$.

Enumerate the \gencrules of $\binom{\alpha}{\numpoints}$ from $\tau_0$ to $\tau_{S(\numpoints, \alpha)-1}$. The maximum size of any \gencrule of $\binom{\alpha}{\numpoints}$ is $\numpoints \cdot d$. For each $\tau_i$, let \begin{align*}
    f_i \colon \binom{\om}{\numpoints \cdot d} \to \binom{\alpha}{\numpoints}
\end{align*}
where if $\tau_i$ has size $p$, $f_i$ maps $X$ to the unique $e \in \binom{\alpha}{\numpoints}$
where $e$ satisfies $\tau_i$ and the $p$ equivalence classes of $e$ are made up of the $p$ least elements of $X$. For example, one \gencrule of $\binom{\om^2 + \om\cdot 8}{2}$ is $$
c_1=2,c_2=1,\rareconst_1=0,\rareconst_2=6,~\commonconst_{1, 1} < \commonconst_{2, 0} < \commonconst_{1, 0}.
$$ The corresponding $f_i$ would be $f_i \colon \binom{\om}{4} \to \binom{\om^2 + \om\cdot 8}{2}$ with \begin{align*}
f_i(x_1,x_2,x_3,x_4) = \{\om^2 \cdot 0 + \om \cdot x_1 + x_3, \om^2 + \om \cdot 6 + x_2\}
\end{align*} where $x_1 < x_2 < x_3 < x_4$.

Then, define $\COL' \colon \binom{\om}{\numpoints \cdot d} \to [\numcolors]^{S(\numpoints, \alpha)}$ with \begin{align*}
    X = (\COL(f_0(X)),\COL(f_1(X)),\ldots,\COL(f_{S(\numpoints, \alpha)-1}(X)))
\end{align*}
and apply Theorem~\ref{th:ramsey} to find some $G \oequiv \om$ where
\begin{align*}
\left | \COL' \left( \binom{G}{\numpoints \cdot d} \right)\right | = 1.
\end{align*}
Let $\Rareconst$ be the one color in $\COL'\left(\binom{G}{\numpoints\cdot d}\right)$.
Note that $\Rareconst$ is a tuple of $S(\numpoints, \alpha)$ colors.

Apply Lemma~\ref{le:bend-nats-alph} to find some
$H \oequiv \alpha$ with the properties listed in Lemma~\ref{le:bend-nats-alph}. Now we claim 
\begin{align*}
    \left | \COL \left( \binom{H}{\numpoints} \right)\right | \leq 
S(\numpoints, \alpha).
\end{align*}

By Lemma~\ref{le:bend-nats-alph}, each element $e \in \binom{H}{\numpoints}$ satisfies a \gencrule of $\binom{\alpha}{\numpoints}$. 
Then for any edge $e$, let $e$ satisfy $\tau_i$ of size $p \leq \numpoints \cdot d$.
Then take the $p$ unique values in $e$, and if necessary, insert any new larger nonnegative naturals from 
$G$ to form a set of $\numpoints \cdot d$ values; denote this $\Commonconst \in \binom{G}{\numpoints \cdot d}$. 
Then
$\COL'(\Commonconst) = \Rareconst$ so by the definition of $\COL'$, $\COL(e) \in \Rareconst$. 
Because $|\Rareconst| = S(\numpoints, \alpha)$, $T(\numpoints,\alpha) \leq S(\numpoints, \alpha)$.
\end{proof}

\subsection{\texorpdfstring{$T(\numpoints,\alpha)\geq S(\numpoints,\alpha)$}{T(\numpoints, alpha) >= S(\numpoints, alpha)}}
\begin{theorem}
    \label{th:alph-LT}
    For all $\alpha < \om^\om, T\left(\numpoints, \alpha\right) \geq S\left(\numpoints, \alpha\right),$
    and thus by Theorem~\ref{th:alph-GT}, $T\left(\numpoints, \alpha\right) = S\left(\numpoints, \alpha\right).$
\end{theorem}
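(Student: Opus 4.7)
The plan is to mirror the structure of the lower-bound arguments in Theorems~\ref{th:wd-GT} and~\ref{th:wdk-GT}, generalized to handle the additional structure captured by \gencrulesnospace. First I would observe that distinct \gencrules are pairwise disjoint on edges: an edge $e \in \binom{\alpha}{\numpoints}$ uniquely determines the $c_\inumpoints$, $\rareconst_\inumpoints$, and the preorder on the $\commonconst_{\inumpoints,\idegr}$ (up to the canonical indexing of criterion~\ref{cr:d-index-ordering}), so no edge can satisfy two nonequivalent \gencrulesnospace. Hence, after enumerating the \gencrules as $\tau_1, \dots, \tau_{S(\numpoints,\alpha)}$, we define $\COL \colon \binom{\alpha}{\numpoints} \to [S(\numpoints,\alpha)]$ by $\COL(e) = i$ if $e$ satisfies $\tau_i$, and $\COL(e) = 1$ otherwise, just as in the $\omega^d\cdot k$ case. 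As in Theorem~\ref{th:wdk-GT}, the key color whose presence is forced by order-equivalence is color $1$ through $\tau_1$; the catch-all ensures $\COL$ is total.

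The core claim is that for every $H \oequiv \alpha$ and every \gencrule $\tau = (\assnc,\assny,\xleq)$, there is some $e \in \binom{H}{\numpoints}$ that satisfies $\tau$. Write $\alpha \oequiv \om^d \cdot k_d + \cdots + \om\cdot k_1 + k_0$ and decompose $H = H_d \setconcat H_{d-1} \setconcat \cdots \setconcat H_0$ with $H_\idegr \oequiv \om^\idegr \cdot k_\idegr$. The assignment $\assnc$ dictates, for each index $\inumpoints$, that the corresponding element $p_\inumpoints$ of $e$ must be drawn from $H_{c_\inumpoints}$. Criterion~\ref{cr:d-c-split} guarantees that any $\xleq$-equivalence class of $\commonconst$-variables involves only indices $\inumpoints$ with the same $c_\inumpoints$, so each equivalence class lives entirely inside one $H_\idegr$, and criterion~\ref{cr:d-term-dim-diff} ensures all variables in an equivalence class share the same exponent.

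From here I would process the equivalence classes of $\xleq$ in increasing order, assigning to each one a natural number strictly larger than every previously assigned value. This is always possible because each $H_\idegr \oequiv \om^\idegr \cdot k_\idegr$ contains elements whose lowest-exponent coefficient can be made arbitrarily large once the higher-exponent coefficients are fixed, exactly as in the proof of Theorem~\ref{th:wdk-GT}. Within each $H_\idegr$, the assignments satisfy criteria~\ref{cr:d-high-first}, \ref{cr:d-term-dim-diff}, and~\ref{cr:d-high-split} by the same inductive argument as in Theorem~\ref{th:wdk-GT}; and since $H_\idegr$ contains $k_\idegr$ full copies of $\om^\idegr$, we can realize the $\rareconst_\inumpoints$ values prescribed by $\assny$ inside $H_\idegr$. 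Criterion~\ref{cr:d-c-split} is automatic from the fact that different $H_\idegr$'s have disjoint leading-coefficient profiles in $\alpha$.

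The main obstacle is the bookkeeping required to respect the single total preorder $\xleq$ that spans variables from different $H_\idegr$'s while simultaneously ensuring each selected element really lies in the prescribed part $H_{c_\inumpoints}$ of $H$ rather than merely in an abstract copy of $\om^{c_\inumpoints}\cdot k_{c_\inumpoints}$. Processing $\xleq$ globally, one equivalence class at a time, with each new value chosen strictly above all previously assigned values, handles the cross-part order; the decomposition $H = H_d \setconcat \cdots \setconcat H_0$ handles the placement. Since every \gencrule of $\binom{\alpha}{\numpoints}$ can be realized inside $\binom{H}{\numpoints}$, we conclude $|\COL(\binom{H}{\numpoints})| \geq S(\numpoints,\alpha)$ for arbitrary $H \oequiv \alpha$, giving $T(\numpoints,\alpha) \geq S(\numpoints,\alpha)$, and combining with Theorem~\ref{th:alph-GT} yields equality.
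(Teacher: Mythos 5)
Your proposal is correct and follows essentially the same route as the paper's proof: the same disjointness observation on \gencrulesnospace, the same catch-all coloring, the same decomposition of $H$ into parts $H_\idegr \oequiv \om^\idegr \cdot k_\idegr$ guided by $\assnc$, and the same left-to-right processing of $\xleq$-equivalence classes with each new value chosen above all previous ones (the paper additionally dispatches the trivial case $S(\numpoints,\alpha)=0$ separately, which you omit but which is immediate).
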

\begin{proof}
If $S(\numpoints, \alpha) = 0$, this is satisfied vacuously because $T(\numpoints, \alpha) \geq 0$. Suppose $S(\numpoints,\alpha) \geq 1$.
Note that all \gencrules of $\binom{\alpha}{\numpoints}$ are disjoint from each other. That is, for any edge $e \in \binom{\alpha}{\numpoints}$, if $e$ satisfies $\tau'$, then it does not satisfy any nonequivalent \gencrule of $\binom{\alpha}{\numpoints}$. This is because if $e$ were to satisfy two \gencrule $\tau_1$ and $\tau_2$, then $\tau_1$ and $\tau_2$ must share the same $\assnc, \assny$ functions and order $\xleq$, so the \gencrules must be equivalent. Therefore, we can index them $\tau_0, \ldots, \tau_{S(\numpoints, \alpha)-1}$ and construct a coloring $\COL \colon \binom{\alpha}{\numpoints} \to [S(\numpoints, \alpha)]$ with \begin{align*}
    \COL(e) = \begin{cases}
    i & e \textnormal{ satisfies } \tau_i\\
    0 & \textnormal{otherwise}
    \end{cases}
\end{align*}

For arbitrary $H \oequiv \alpha$ and a \gencrule $\tau$ for $\alpha$, we can assign $c_{\inumpoints}$ and $\rareconst_{\inumpoints}$ based on $\tau$. Then we can apply a similar process to the one used in Theorem~\ref{th:wdk-GT} to find $e_{\inumpoints,\idegr}$ variables that match the permutation of $\commonconst_{\inumpoints,\idegr}$ variables.

Let $\alpha \oequiv \om^d \cdot k_d + \om^{d-1} \cdot k_{d-1} + \cdots + \om \cdot k_1 + k_0$.
We can separate $H$ into $d+1$ sets each order-equivalent to $\om^{\idegr} \cdot k_{\idegr}$ for $0 \leq \idegr \leq d$, and separate each of those into $k_{\idegr}$ sets order-equivalent to $\om^{\idegr}$.

Then, for each equivalence class in $\tau$, using the process formally described in the proof of Theorem~\ref{th:wd-GT},
we consider the leading equivalence class of $\tau$. By criteria~\ref{cr:d-y-distinct} and~\ref{cr:d-c-split}
of Definition~\ref{def:SC}, all variables in that equivalence class 
must come from the same set order-equivalent to $\om^{\idegr}$. We assign a finite value to 
that equivalence class and move to the next class with potentially different 
$c$ and $\rareconst$ values, using the assigned finite value as a lower bound for the next one. We can repeat this process to find $e_{\inumpoints ,\idegr}$ that satisfy each \gencrule of $\binom{\alpha}{\numpoints}$ for arbitrary $H \oequiv \alpha$.

Therefore for all 
$H \oequiv \alpha$,
$\left|\COL\left(\binom{H}{\numpoints}\right)\right|\geq S(\numpoints,\alpha)$ so $T(\numpoints,\alpha) \geq S\left(\numpoints, \alpha\right)$.
With Theorem~\ref{th:alph-GT}, we have
$T(\numpoints,\alpha) = S\left(\numpoints, \alpha\right)$.
\end{proof}





\subsection*{Acknowledgments}

We would like to thank Natasha Dobrinen for introducing us to this
subject, giving us advice on the project, and helpful comments on the 
final draft. We thank Robert Brady and Erik Metz for the helpful comments on the final draft.
We would also like to thank 
Nathan Cho, 
Isaac Mammel, and 
Adam Melrod for helping us simplify the proof of Theorem~\ref{th:zeta}.

\section*{Appendix}
Table~\ref{tab:om-d-a} shows $T(\numpoints,\om^d)$ for small $\numpoints,d$.
\begin{table}[htpb]
\centering
\begin{tabular}{| c | c | r | r | r | r | r | r|}
    \hline
     \multicolumn{2}{|c|}{\multirow{2}{*}{$T(n,\om^d)$}}&\multicolumn{6}{c|}{$\numpoints$}\\\cline{3-8}
     \multicolumn{1}{|c}{}&&0&1&2&3&4&5\\\hline
     \multirow{6}{*}{$d$}&0&1&1&0&0&0&0\\\cline{2-8}
     &1&1&1&1&1&1&1\\\cline{2-8}
     &2&1&1&4&26&236&2752\\\cline{2-8}
     &3&1&1&14&509&35839&4154652\\\cline{2-8}
     &4&1&1&49&10340&5941404&7244337796\\\cline{2-8}
     &5&1&1&175&222244&1081112575&14372713082763\\\hline
\end{tabular}
\caption{Big Ramsey degrees of $\om^d$.}
\label{tab:om-d-a}
\end{table}

\end{document}